\theoremstyle{thmstyleone}%
\newtheorem{theorem}{Theorem}
\newtheorem{proposition}[theorem]{Proposition}%
\newtheorem{assumption}{Assumption}
\newtheorem{corollary}{Corollary}
\newtheorem{lemma}{Lemma}
\theoremstyle{thmstyletwo}%
\newtheorem{remark}{Remark}%
\theoremstyle{thmstylethree}%
\newtheorem{definition}{Definition}%
\newcommand{\FT}{\mathcal{F}}
\newcommand{\BT}{\mathcal{B}}
\newcommand{\UT}{\mathcal{U}}
\newcommand{\LT}{\mathcal{L}}
\newtcolorbox{mygraybox}{colframe = gray}
\begin{document}

\title[Nonlinear 
Optimal Recovery in Hilbert Spaces]{Nonlinear 
Optimal Recovery in Hilbert Spaces}


\author[1]{\fnm{Daozhe} \sur{Lin}}\email{dl3394@columbia.edu}

\author[1,2]{\fnm{Qiang} \sur{Du}}\email{qd2125@columbia.edu}

\affil[1]{\orgdiv{Department of Applied Physics and Applied Mathematics}, \orgname{Columbia University}, \orgaddress{\city{New York}, \state{NY},\postcode{10027}}}

\affil[2]{\orgdiv{Data Science Institute}, \orgname{Columbia University}, \orgaddress{\city{New York}, \state{NY},\postcode{10027}}}


\abstract{This paper investigates solution strategies for nonlinear problems in Hilbert spaces, such as nonlinear partial differential equations (PDEs) in Sobolev spaces, when only finite measurements are available. We formulate this as a \textbf{nonlinear optimal recovery} problem, establishing its well-posedness and proving its convergence to the true solution as the number of measurements increases. However, the resulting formulation might not have a finite-dimensional solution in general. We thus present a sufficient condition for the finite dimensionality of the solution, applicable to problems with well-defined point evaluation measurements. To address the broader setting, we introduce a \textbf{relaxed nonlinear optimal recovery} and provide a detailed convergence analysis. An illustrative example is given to demonstrate that our formulations and theoretical findings offer a comprehensive framework for solving nonlinear problems in infinite-dimensional spaces with limited data.}

\keywords{Optimal recovery, Approximation of nonlinear problems, Finite measurement, Reproducing Kernel Hilbert Space, Finite dimensional Representation, Convergence}

\maketitle
\section{Introduction} 
Many mathematical models developed for the study and understanding of physical phenomena can be cast as finding solutions to linear or nonlinear problems in Banach spaces. These spaces could be infinite-dimensional, such as the case of nonlinear partial differential equations (PDEs). As analytical solutions in simple close-forms are often elusive or even nonexistent, 
numerical methods play a pivotal role in providing effective finite dimensional approximations and insights into the behavior of these complex systems. The subject of numerical PDEs, from finite difference and finite element methods to spectral methods, mesh-free techniques, and machine learning methods, continues to evolve, driven by advances in numerical algorithms and mathematical theory, as well as computational resources and practical considerations.
In applications, one of the frequently encountered challenges is the lack of a complete physical model or the limited availability of model parameters or data.
In such situations, we see the natural connection to optimal recovery problems.

Based on finite measurements, the optimal recovery aims to find the minimum norm solution that satisfies all the measurements. Recently, optimal recovery methods for solving linear PDEs have drawn considerable attention due to the finite-dimensional nature of the optimally recovered solutions, making them computationally tractable and amenable to efficient implementations \cite{chen2021solving, owhadi2019operator, binev2023solving}. Furthermore, the robust convergence theory associated with these methods underscores their appeal, offering a rigorous framework for analyzing and guaranteeing the accuracy of recovered solutions in various scientific and engineering applications \cite{chen2021solving,batlle2023error}.

\subsection{Summary of our Contributions} 
\label{subsec:contri}
The primary goal of this paper is to introduce a general framework for optimal recovery in abstract Hilbert spaces and to develop its mathematical theory under a suitable set of assumptions. We further discuss the optimality conditions and investigate scenarios in which our framework yields numerically solvable problems. For cases that might not appear directly tractable numerically, we present some generalizations as possible remedies. We also specialize to some concrete examples to illustrate our theory.

To be more specific, we present below a more formal description of the problems under consideration, together with a summary of our contributions.

\begin{itemize}
\item \textbf{An Abstract Nonlinear Problem with Finite measurements}: 
Given a nonlinear operator $\FT$ from a Hilbert space $\mathcal{U}$  
to a reflexive Banach space $\mathcal{B}$, we seek a solution $u$ in $\mathcal{U}$ to the following abstract nonlinear equation:
\begin{equation}\label{eq:nonleq}
    [\FT(u),\phi]=[f,\phi],\ \ \forall\phi\in\BT^*
\end{equation}
under the assumption that the only prescribed information on the data $f\in \mathcal{B}$ is through $N$ finite measurements,
 $\{[f,\phi_n]\}_{n=1}^N$,
for some given $\{\phi_n\in \mathcal{B}^*\}$, i.e., 
\begin{equation}\label{eq:fmeasure}
[\FT(u),\phi_n]=
[f,\phi_n],\ \ n=1,...,N.
\end{equation}
Here,
 $[\cdot,\cdot]$ denotes the duality pairing between $\mathcal{B}$ and $\mathcal{B}^*$, so that our constraint is defined naturally in the weak form with $\{\phi_n\}$ being test functions.
 
  We are interested in the problem for not only a given finite $N$ but also as $N\to \infty$,  assuming that $\text{Span}\{\phi_n\}_{n=1}^\infty$ forms a dense subset of $\mathcal{B}^*$. For easy representation, throughout this paper, we will assume that the abstract nonlinear equation admits a \textbf{unique solution} $u^*$.
 \footnote{Much of our discussions remain valid, with suitable modifications, for cases that the nonlinear problem has multiple isolated solutions, see comments given later.}
On the other hand,  the problem with only finite measurements \eqref{eq:fmeasure} is in general under-determined, thus its solution is not expected to be unique. This calls for further specifications to be illustrated later in Section \ref{sec:ORonNP}.

    \item \textbf{Nonlinear Optimal Recovery}: 
The nonlinear optimal recovery problem is a framework to construct a numerical solution to the above nonlinear problem with finite measuraments by solving the following variational problem:
\begin{equation}\label{eq:optrec}
\begin{aligned}
& \min_{u\in\mathcal{U}}\ \  \|u\|_\mathcal{U}\\
s.t. & \ \ [\FT(u),\phi_n]=[f,\phi_n],\ \ n=1,...,N
\end{aligned}
\end{equation}
In Theorem \ref{convergeNOR}, we demonstrate theoretically that, as $N\to \infty$, the solution $u_N$ of the nonlinear optimal recovery problem \eqref{eq:optrec} converges strongly to a weak solution $u^*$ of the abstract nonlinear problem \eqref{nonlinearproblem},
under suitable assumptions.

\item \textbf{Optimality and Finite Dimensionality Conditions}: 
Given a fixed $N$, for the solution $u_N$ of the optimal recovery problem \eqref{eq:optrec},  we present in Proposition \ref{OCNonlinear} the optimality condition 
\begin{equation}
    \label{eq:orcond}
u_N=\sum_{n=1}^N\lambda_n\psi(u_N,\phi_n).
\end{equation} 
Here, the operator
$\psi:\UT\times\BT^*\rightarrow\UT$ is defined by
\begin{equation}
    \label{eq:orpsi}
\langle \psi(u,\phi_n),v\rangle_\UT=[D_u\FT(u)v,\phi_n],\quad\forall
v\in \UT 
\ n=1,...,N,
\end{equation}
corresponding to the inner product  $\langle\cdot,\cdot\rangle_\UT$ of the Hilbert space $\UT$. 

In general, it might not be possible to characterize the solution $u_N$ in a finite-dimensional space. However, the following sufficient condition assures that the solution $u_N$ can be constructed with a finite-dimensional representation:
\begin{equation}
    \label{eq:fdcond}
    \psi(u,\phi_n)=\sum_{l=1}^{L}c_{l}(u,\phi_n)\psi_{l}(\phi_n),\qquad n=1,...,N, \; \forall u \in \UT,
    \end{equation}
for $L$ scalar functions $\{c_{l}:\UT\times\BT^*\rightarrow\mathbb{R}\}$ and $L$ operators $\psi_{l}:\BT^*\rightarrow\UT$ that are independent of $u$. The condition holds obviously when $\FT$ is a linear operator. However, this is a trivial case, since $D_u\FT(u)$ becomes independent of $u$. Meanwhile,there are also non-trivial cases where this condition also holds. 

\item \textbf{Finite Measurement Nonlinear Operator}:\\
We provide nontrivial cases where the aforementioned finite-dimensionality condition holds. We define a \textbf{finite measurement nonlinear operator} as an operator $\FT$ of the form
\[
[\FT(u), \phi] = F_{\phi}\big(\langle \psi_1(\phi), u \rangle, \ldots, \langle \psi_L(\phi), u \rangle\big), \qquad \forall u \in \UT,
\]
for any $\phi$ in a specific test function set $\BT^*_0$.\\
This class of operators encompasses many important examples, including nonlinear equations in reproducing spaces (to be discussed in Section~\ref{sec:NEinFS}) and neural operators~\cite{lu2021learning,bhattacharya2021model,seidman2022nomad,lanthaler2023parametric}.\\
We will show that, under suitable assumptions, these operators not only admit finite-dimensional solution spaces but are also weakly continuous—a key condition in our convergence analysis.
\item \textbf{A Regularization Framework}: In many data science problems, a regularized loss function composed of fidelity terms and regularization terms is employed to find a robust solution. In the same spirit, for the nonlinear problem with finite measurements \eqref{eq:fmeasure}, we define the following regularization framework:
\begin{equation}\label{eq:reg}
    \min_{u\in\mathcal{U}}\ \ \ \sum_{n=1}^N([\FT(u),\phi_n]-[f,\phi_n])^2+\frac{1}{\mu}\|u\|^2_\mathcal{U}
\end{equation} 
In Theorem \ref{convergeregu}, it is shown that the solution $u^\mu_N$ of the above regularized formulation \eqref{eq:reg} converges to the nonlinear optimal recovery solution $u_N$ as $\mu \rightarrow \infty$. Moreover, $u_N^\mu$ converges to the solution $u^*$  of the nonlinear problem \eqref{eq:nonleq} as $(\mu,N)\rightarrow(\infty,\infty)$.

\item \textbf{A Relaxed Nonlinear Optimal Recovery}: When the measurement test functions $\{\phi_n\}$ do not satisfy the aforementioned finite dimensionality condition, but can be approximated by functions $\{\phi_{nm}\}$ that do: namely, for each fixed $n$, there exist coefficients $\{c_{nm}\}$ such that:
$$\|\sum_{m=1}^M c_{nm}\phi_{nm}-\phi_n\|_{\BT*}\rightarrow 0, \qquad \text{as }M\rightarrow\infty.$$
To obtain a finite-dimensional solution, we can generalize the nonlinear optimal recovery by proposing the following relaxed nonlinear optimal recovery formulation:
\begin{equation}\label{eq:relaxedrec}
\begin{aligned}
\min_{u\in\mathcal{U}}&\ \ \ \ \ \ \ \ \ \|u\|_\mathcal{U}\\
s.t&\ \ \left| [\FT(u),\sum_{m=1}^M c_{mn}\phi_{nm}]-[f,\phi_n]\right|\le \epsilon_{nM},\ n=1,...,N.
\end{aligned}
\end{equation}
In Theorem \ref{convergenceQuasi}, we demonstrate that, under appropriate assumptions, the solution $u^M_N$ of the relaxed nonlinear optimal recovery \eqref{eq:relaxedrec} converges to the solution $u_N$  of the nonlinear optimal recovery problem \eqref{eq:optrec} as $M\rightarrow\infty$, and to the solution $u^*$ of the nonlinear problem \eqref{eq:nonleq} as $(M, N)\rightarrow(\infty,\infty)$.

\item \textbf{Optimal Recovery in Reproducing Function Spaces}: Consider a special case of finite measurement nonlinear operators given by a nonlinear equation of an unknown function $u=u(x)$ on a domain $\Omega$: 
$$\mathcal{F}(u)(x) = F(L_1u(x), \ldots, L_Pu(x),x) = f(x)$$
with the Banach space $\BT$ given by a reproducing kernel Banach space (see definition\ref{RKBS}).
We present 3 key observations on the point evaluations in $\BT^*$ (see definition \ref{pointeva}): 
\begin{itemize}
    \item When the test functions are point evaluations, the nonlinear optimal recovery of the above equation yields finite-dimensional solutions, see {Proposition \ref{OCFunction},\ref{OCQuasiFunction}}.
    \item The span of all point evaluations is dense in the dual space of any reproducing kernel Banach space $\BT$ under the weak* topology, see {Lemma \ref{pointapprox}}.
    \item When the function $F$ is continuous, the operator $\FT$ is weakly continuous in $\BT$.
\end{itemize} 
These observations enable the use of relaxed nonlinear optimal recovery with point evaluations as the approximation function set to obtain a finite-dimensional solution of the nonlinear problem of finite measurements with convergence guarantees.
\item \textbf{Linear-Nonlinear Decomposition}: We consider
a nonlinear equation with a linear-nonlinear decomposition, namely:
$$[\FT(u),\phi]=[Lu,\phi]+[\hat{\FT}(u),\phi]=[f,\phi]$$
where $L$ can be seen as the principle linear part with $\hat{\FT}$ being a nonlinear perturbation that maps into a more regular function space. In this case, we can generalize the relaxed optimal recovery \eqref{eq:relaxedrec} by approximating with point evaluations only those test functions corresponding to the nonlinear term. That is,
\begin{equation}\label{eq:apprelax}
\begin{aligned}
\min_{u\in\mathcal{U}}&\ \ \ \ \ \ \ \ \ \|u\|_{\mathcal{U}}\\
s.t&\ \ \left| [Lu,\phi_n]+[\hat{\FT}(u),\sum_{m=1}^{M} c_{mn}\delta_{x_m}]-[f,\phi_n]\right|\le \epsilon_{nM},\; n=1,...,N.
\end{aligned}
\end{equation}
This generalization significantly reduces the regularity requirement of the solution when using relaxed nonlinear optimal recovery to obtain a finite-dimensional solution. We present the optimality condition and the convergence result of this generalization and a concrete example will be presented in Section \ref{subsec:MOGene}.

\item \textbf{Multi-Domain Generalization}:
Let the domain $\Omega$ have a decomposition, that is, $\Omega=\cup^P_{p=0}\Omega_p$ where $\Omega_0,...,\Omega_P$ are disjoint subdomains. In this case,  we consider a nonlinear equation defined piecewisely on the subdomains corresponding to a nonlinear operator $\FT$ given by:
$$ \FT(u)(x)= \left\{\begin{aligned}
    &Lu(x),\quad&\forall x\in\Omega_p,p=0,\\ &F_p(L_{1,p}u(x),...,L_{Q_p,p}u(x)),\quad&\forall x\in\Omega_p,p=1,...,P
\end{aligned}\right.$$
Applying the point evaluation approximation on subdomains where $
\FT$ is nonlinear, we obtain the following generalized relaxed nonlinear optimal recovery:
\begin{equation}
\label{eq:grelaxed}
\begin{aligned}
\min_{u\in\mathcal{U}}& \ \ \ \|u\|_\mathcal{U}\\
s.t&\ \ \left| [L u,\phi_n]_{\Omega_0}+\sum_{p=1}^P [\FT(u),\sum_{m=1}^{M} c_{mnp}\delta_{x_{mp}}]_{\Omega_p} -[f,\phi_n]\right|\le \epsilon_{nM},\ n=1,...,N.
\end{aligned}
\end{equation}
This generalization is applicable to the solution of nonlinear PDE problems with linear boundary conditions, see the example of a linear natural boundary condition 
represented by a Robin boundary in Section \ref{subsec:MOGene}. As in previous cases, the optimality condition and convergence result for this generalization are presented.
\end{itemize}

To offer a concise summary of the main results,  a diagram is provided in Figure \ref{convergence} to illustrate the relationships among the different problems and the associated solutions $u^*$(\textbf{nonlinear problem}), $u_N$(\textbf{nonlinear optimal recovery}), $u^\mu_N$(\textbf{regularization Problem}) and $u^M_N$(\textbf{
relaxed optimal recovery}).
\begin{figure}[h]
    \centering
    \includegraphics[width=0.75\linewidth]{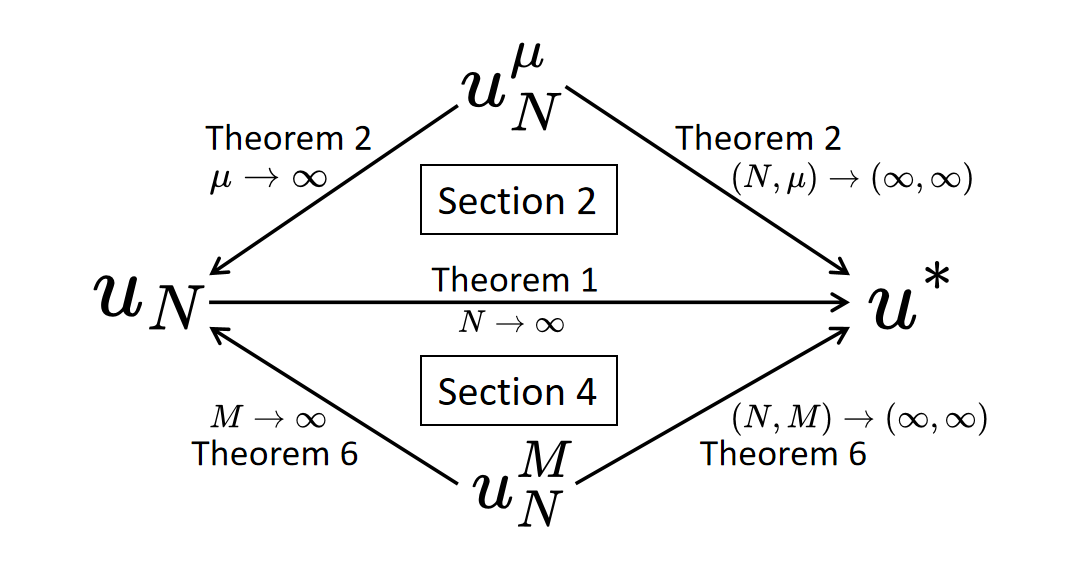}
    \caption{Convergence relation among different solutions}
    \label{convergence}
    \end{figure}
\subsection{Related Works} 
\label{subsec:literature}
There have been many studies on optimal recovery problems. We now discuss those closely related to the results summarized in subsection \ref{subsec:contri}.
\begin{itemize}
    \item \textbf{Optimal Recovery/Minimum Norm Interpolation}: The Optimal Recovery  \cite{micchelli1977survey}, or the Minimum Norm Interpolation, is a widely employed method in regression tasks, such as Ridge regression \cite{mcdonald2009ridge} and Lasso \cite{tibshirani1996regression}. Recently, there has been growing interest in utilizing it to find solutions for partial differential equations (PDEs) \cite{owhadi2019operator,binev2023solving}. However, the extension of optimal recovery to nonlinear problems has also gained attention and has been studied \cite{chen2021solving}.
    \item \textbf{The Representer Theorem and the Reproducing Space}: The representer theorem for infinite-dimensional optimization as a pathway to numerical solvability has been widely studied. The most renowned result is the representer theorem in Reproducing Kernel Hilbert Spaces (RKHS) \cite{10.1007/3-540-44581-1_27}, which characterizes the solution of the optimal recovery problem with the RKHS norm and Dirac measurements as a linear combination of kernels. Additionally, there have also been investigations on the representer theorem in general Hilbert spaces \cite{dinuzzo2012representer}. Moreover, the extension of this theorem to (Reproducing) Banach spaces has been explored \cite{unser2021unifying,JMLR:v22:20-751}.
\end{itemize}

\subsection{Outline of the Paper}
The paper is organized as follows. 

In Section \ref{sec:ORonNP}, we introduce the concept of nonlinear optimal recovery and show its convergence properties. In addition, we present a related regularization problem and demonstrate analogous convergence results.

In Section \ref{sec:OandQNOR}, we present the optimality condition for the solution of the nonlinear optimal recovery problem. Furthermore, we establish a sufficient condition for the solution to be finite-dimensional. Additionally, we propose a relaxed nonlinear optimal recovery algorithm to extend the class of nonlinear equations that admit finite-dimensional solutions. Lastly, we introduce the finite measurement neural operator, a class of operators that satisfy the sufficient condition.

In Section \ref{sec:NEinFS}, we examine a broad class of nonlinear functional equations, a significant subset of nonlinear equations. We demonstrate that point evaluation test functions yield finite-dimensional solutions and the nonlinear operator is weakly continuous. Furthermore, we employ the relaxed nonlinear optimal recovery formulation to extend this result to test functions that can be uniformly approximated by point evaluations.

In Section \ref{sec:MOG}, we generalize the nonlinear optimal recovery and relaxed nonlinear optimal recovery formulations to multi-operator nonlinear equations, accompanied by an analysis of the associated convergence theory. We then explore the realization of these generalizations in functional nonlinear equations, demonstrating how this extension enhances and broadens the problem-solving capabilities of our method.

In Section \ref{sec:ACE}, we present an illustrative example in the form of partial differential equations that demonstrates the efficacy of our algorithms and theoretical framework. Section \ref{sec:CandFD} concludes with additional discussions and outlines potential future research directions. Proofs of all theorems are provided in the Appendix \ref{sec:append}.

\section{Optimal Recovery on Nonlinear Problem}
\label{sec:ORonNP}

In this section, we describe in detail the proposed abstract framework-- \textbf{nonlinear optimal recovery}--to solve general nonlinear problems with finite measurements. In subsection \ref{subsec:ORandRT}, We begin with the introduction to classic optimal recovery and present the representer theorem as well as the extension to linear problems. In subsection \ref{subsec:NOR}, we set up the nonliear optimal recovery method
and explore the theory behind.
We show that as the number of measurements $N\to \infty$, the solutions of the nonlinear optimal recovery 
converge to a solution of the nonlinear problem  \eqref{nonlinearproblem}. Finally in subsection \ref{subsec:RF}, we introduce the regularization problem, a widely used approach in data science. We demonstrate that the regularized solution not only converges to the optimal recovery problem \eqref{NOR}, for a fixed $N$, as the regularization parameter increases but also asymptotically converges to the solution of the nonlinear problem  \eqref{nonlinearproblem}.

\subsection{Optimal Recovery and Representer Theorem}
\label{subsec:ORandRT}
\subsubsection{Classic Optimal Recovery}
Consider the problem of recovering $u$ in a Hilbert space $\UT$ from $N$ measurements
$$\textbf{V}_N(u):=(\langle u,v_1\rangle_\UT
=f_1,...,\langle u,v_N\rangle_\UT
)=f_N\in\mathbb{R}^N $$
where $\langle\cdot,\cdot\rangle$, as used before,  is the inner product in $\UT$, and we use $\|\cdot\|_{\UT}$ to represent the induced norm in subsequent discussions. Meanwhile,  $\{v_n\}_{n=1}^N\subset \UT$ are measurement functions and $\{f_n\}_{n=1}^N$ are measurement data. 

The original \textbf{optimal recovery} \cite{micchelli1977survey,owhadi2019operator} aims to find the best solution operator $\Psi:\mathbb{R}^N\rightarrow \UT$ for the following problem:
$$\min_\Psi\max_{u\in\UT}\ \ \ \ \frac{\|u-\Psi(V_N(u))\|_{\UT}}{\|u\|_{\UT}} .$$

Moreover, given a function $u$, one of the optimal solutions $u_N=\Psi(V_N(u))$ of the problem above can be characterized as the solution to the following \textbf{minimum norm interpolation} problem:
$$\begin{aligned}
\min_{u\in\mathcal{U}}&\ \ \ \ \ \ \ \ \ \|u\|_\mathcal{U}\\
s.t&\ \ \langle u,v_n\rangle_\UT=f_n,\ \ n=1,...,N.
\end{aligned}$$
The \textbf{Representer Theorem} \cite{owhadi2019operator}, which plays an essential role in optimal recovery, shows that the solution $u_N$ of the optimal recovery problem has a finite dimensional representation. Specifically speaking, we have
$$u_N=\sum_{n=1}^N \lambda_n v_n $$
 where $\{\lambda_n\}$ are the coefficients, implying that $u_N$ is the projection of $u$ in the space spanned by the measurement functions in the space $U$.
 
 \subsubsection{Optimal Recovery for Abstract Linear Problems}
The optimal recovery can be easily formulated in an abstract setting for a linear operator
$\LT:\UT\rightarrow\BT^*$.
 Given $N$ measurements $\{[\LT u,\phi_n]\}_{n=1}^N$
 for $N$ measurement function $\{\phi_n\}_{n=1}^N\subset\BT^*$,
 the optimal recovery problem is equivalent to solve 
 $$
\min\,\|u\|_\mathcal{U}\;\;
\text{ among }
u\in\mathcal{U}
\text{ with prescribed } 
\{[\LT u,\phi_n]\}_{n=1}^N\,.
$$

One can prove a similar representer theorem in this case.
Specifically,  
for $1\leq n\leq N$, let $v_n\in \UT$ be the Riesz representer such that 
$$\langle w,v_n\rangle=[\LT w,\phi_n], \; \forall w\in U.$$
That is, $v_n=R_\UT \LT^* \phi_n$ where $\LT^*$ denotes the adjoint operator of $\LT$ and $R_\UT$ is the Riesz mapping in $\UT$ that maps the element in $\UT^*$ to its Riesz representer. Then
the solution $u_N$ satisfies
$$u_N=\sum_{n=1}^N\lambda_nv_n=\sum_{n=1}^N\lambda_n  R_\UT \LT^*\phi_n
$$
with some coefficients $\{\lambda_n\}_{n=1}^N$.

\subsection{Nonlinear Optimal Recovery}
\label{subsec:NOR}
We now focus on the nonlinear equation:
\begin{equation}
\label{nonlinearproblem}
[\FT(u),\phi]=[f,\phi],\ \ \forall\phi\in\BT^*
\end{equation}
where $u\in\UT$, a Hilbert space, and $\FT(u)\in\BT$, a reflexive Banach space. Analogous to the linear optimal recovery approach, with finite measurements $\{[\FT(u),\phi_n]\}$, we introduce the following \textbf{nonlinear optimal recovery} framework:
\begin{equation}
\label{NOR}
\begin{aligned}
\min_{u\in\mathcal{U}}&\ \ \ \ \ \ \ \ \ \|u\|_\mathcal{U}\\
s.t&\ \ [\FT(u),\phi_n]=[f,\phi_n],\ \ n=1,...,N
\end{aligned}
\end{equation}
We now present a general theorem for  \eqref{NOR}, and show that under proper assumptions, the nonlinear optimal recovery problem with $N$ measurements indeed admits a solution. And when $N\rightarrow\infty$, the solution of the optimal recovery problem will converge to the solution of the nonlinear problem  \eqref{nonlinearproblem}. We first present some assumptions.
\begin{assumption}
\textbf{Dense span of test functions}: $\forall \phi\in \mathcal{B}^*$, there is a sequence $\left\{
    \tilde{\phi}_N\in\text{Span}\{\phi_j\}_{j=1}^N \right\}_{N=1}^\infty$ such that $\tilde{\phi}_N$ converge to $\phi$ weak*-ly in $\mathcal{B}^*$.
\label{assu1}
\end{assumption}
\begin{assumption}
\label{assu11} \textbf{Weak continuity}: $u_N$ weakly converging to $u$ in $\mathcal{U}$ implies the weak convergence of $\FT(u_N)$ to $\FT(u)$ in $\BT$.
\end{assumption}
The results are summarized in the following theorem.
\begin{theorem}
\label{convergeNOR} 
Under assumptions \ref{assu1} and \ref{assu11}, 
 the optimal recovery problem \eqref{NOR} for a given $N$ has a solution $u_N$ in $\UT$. Assuming the solution of nonlinear problem \eqref{nonlinearproblem} $u^*$ is also in $\UT$, then $u_N\rightarrow u^*$ strongly in $\mathcal{U}$ as $N\to \infty$.
\end{theorem}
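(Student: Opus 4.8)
The plan is to establish the two assertions separately, both through the direct method of the calculus of variations, with the essential difficulty concentrated in identifying the weak limit of $\{u_N\}$ with $u^*$. For existence at a fixed $N$, I would proceed as follows. The feasible set $\{u\in\UT : [\FT(u),\phi_n]=[f,\phi_n],\ n=1,\dots,N\}$ is nonempty, since it contains the solution $u^*$ of \eqref{nonlinearproblem}, which satisfies every measurement; hence the infimum of $\|\cdot\|_\UT$ over it is finite. Taking a minimizing sequence $\{u_k\}$, which is norm-bounded, and using that $\UT$ is a Hilbert space (thus reflexive), I extract a weakly convergent subsequence $u_{k_j}\rightharpoonup\bar u$. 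Weak lower semicontinuity of the norm gives $\|\bar u\|_\UT\le\liminf_j\|u_{k_j}\|_\UT$, so $\bar u$ attains the infimum provided it is feasible. Feasibility is exactly where Assumption \ref{assu11} enters: weak continuity yields $\FT(u_{k_j})\rightharpoonup\FT(\bar u)$ in $\BT$, whence $[\FT(u_{k_j}),\phi_n]\to[\FT(\bar u),\phi_n]$ for each fixed $n$; since the left-hand side equals $[f,\phi_n]$ along the sequence, $\bar u$ satisfies all $N$ constraints and $u_N:=\bar u$ is a minimizer.

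For the convergence statement I would first note that $u^*$ is feasible for every $N$, so minimality gives the uniform bound $\|u_N\|_\UT\le\|u^*\|_\UT$. Boundedness and reflexivity then let me extract a weakly convergent subsequence $u_{N_k}\rightharpoonup\hat u$. The key step is to show that $\hat u$ solves the full problem \eqref{nonlinearproblem}, after which the assumed uniqueness forces $\hat u=u^*$. I would carry this out in two \emph{decoupled} limits in order to avoid pairing two simultaneously moving objects. First, fix an index $m$: once $N_k\ge m$, the constraint $[\FT(u_{N_k}),\phi_m]=[f,\phi_m]$ holds, and weak continuity (Assumption \ref{assu11}) passes to the limit against the fixed test function $\phi_m$, giving $[\FT(\hat u),\phi_m]=[f,\phi_m]$ for all $m$; by linearity, $[\FT(\hat u)-f,\tilde\phi]=0$ for every $\tilde\phi\in\text{Span}\{\phi_j\}$. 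Second, with the element $\FT(\hat u)-f\in\BT$ now held fixed, I take an arbitrary $\phi\in\BT^*$ together with a sequence $\tilde\phi_N\to\phi$ weak*-ly supplied by Assumption \ref{assu1}; since weak* convergence pairs against the fixed $\FT(\hat u)-f$, I obtain $[\FT(\hat u)-f,\phi]=\lim_N[\FT(\hat u)-f,\tilde\phi_N]=0$. Therefore $\hat u$ solves \eqref{nonlinearproblem} and $\hat u=u^*$.

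I expect the main obstacle to be precisely this identification: the duality pairing is \emph{not} jointly continuous under weak convergence in $\BT$ and weak* convergence in $\BT^*$ occurring simultaneously, so one cannot naively pass to the limit in an expression such as $[\FT(u_{N_k}),\tilde\phi_{N_k}]$. The decoupling described above — limiting first in the solution against fixed test functions, then in the test functions against a fixed limit element — is the device that circumvents this difficulty, and it is the heart of the argument.

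Finally, to upgrade to strong convergence of the whole sequence, I would combine the uniform bound with weak lower semicontinuity to obtain $\|u^*\|_\UT\le\liminf_k\|u_{N_k}\|_\UT\le\limsup_k\|u_{N_k}\|_\UT\le\|u^*\|_\UT$, so that $\|u_{N_k}\|_\UT\to\|u^*\|_\UT$. In a Hilbert space, weak convergence together with convergence of norms yields strong convergence, via the expansion $\|u_{N_k}-u^*\|_\UT^2=\|u_{N_k}\|_\UT^2-2\langle u_{N_k},u^*\rangle_\UT+\|u^*\|_\UT^2\to 0$. Since every subsequence of $\{u_N\}$ then admits a further subsequence converging strongly to the same limit $u^*$, a standard subsequence argument yields $u_N\to u^*$ strongly in $\UT$.
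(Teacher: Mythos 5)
Your proposal is correct and follows essentially the same route as the paper's proof: direct method with weak compactness and weak lower semicontinuity for existence, the uniform bound $\|u_N\|_\UT\le\|u^*\|_\UT$ from feasibility of $u^*$, the decoupled passage to the limit (first in $u_{N_k}$ against fixed test functions in the span, then in the test functions against the fixed element $\FT(\hat u)-f$ using the weak* density of Assumption \ref{assu1}), and finally norm convergence plus weak convergence to upgrade to strong convergence. Your explicit remark about why the two limits must be decoupled is a nice articulation of a point the paper handles implicitly.
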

The proof can be found in the Appendix \ref{proofThm1}. 
\subsection{Regularization Framework}
\label{subsec:RF}
Regularization techniques are widely used in data science, where we minimize a loss function composed of a fidelity term and regularization terms. The goal of regularization is to find the best fitting function with some regularity, aligning with the objective of the optimal recovery problem. Similarly, we can define the following \textbf{regularization framework} to find a suitable solution from finite measurements:
\begin{equation}
\label{regularization}
    \min_{u\in\mathcal{U}}\ \ \ \underbrace{\sum_{n=1}^N|[\FT(u),\phi_n]-[f,\phi_n]|^2}_{fidelity}\;\; +\hspace{-.1cm}\underbrace{\frac{1}{\mu}\|u\|^2_\mathcal{U}}_{regularization}
\end{equation}
with a regularization parameter $\mu$. This regularization problem can be interpreted as a relaxation of the nonlinear optimal recovery problem \eqref{NOR}. We can expect that the solution to this regularization problem will converge to the solution of the nonlinear optimal recovery problem as the regularization parameter $\mu$ approaches $0$. We make the following assumption first.
\begin{assumption}
\label{assu2} \textbf{Uniquenss}: The solution $u_N$ of nonlinear optimal recovery problem\eqref{NOR} is unique.

\end{assumption}
In the following theorem, we prove the above convergence. Moreover, we prove that the solution of this regularization problem converges to the solution of the nonlinear problem  \eqref{nonlinearproblem}.
\begin{theorem}
\label{convergeregu}
Under assumptions \ref{assu1}, \ref{assu11} and \ref{assu2}, the regularization problem \eqref{regularization} with $N$ measurements and the regularization parameter $\mu$ has a solution $u^\mu_N$ in $\UT$. In addition, we have,
\begin{itemize}
    \item $u^\mu_N\rightarrow u_N$ strongly in $\UT$ as $\mu\rightarrow \infty$, where 
  $u_N$ is the unique solution of the nonlinear optimal recovery \eqref{NOR}. 
    \item $u^\mu_N\rightarrow u^*$ strongly in $\UT$ as $(\mu,N)\rightarrow(\infty,\infty)$, 
    where $u^*$ is the solution of the nonlinear problem \eqref{nonlinearproblem}.
\end{itemize}
\end{theorem}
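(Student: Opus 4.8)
The plan is to first establish existence of a minimizer $u^\mu_N$ by the direct method, then treat the two convergence statements in turn, with the joint limit essentially re-running the argument of Theorem \ref{convergeNOR}. For existence, I would take a minimizing sequence for the functional $J_\mu(u)=\sum_{n=1}^N|[\FT(u),\phi_n]-[f,\phi_n]|^2+\frac{1}{\mu}\|u\|_\UT^2$. The regularization term $\frac{1}{\mu}\|u\|_\UT^2$ forces the minimizing sequence to be bounded in $\UT$, so weak compactness of bounded sets in the Hilbert space yields a weakly convergent subsequence. Assumption \ref{assu11} then gives $[\FT(u_k),\phi_n]\to[\FT(\bar u),\phi_n]$ for each $n$, so the fidelity term passes to the limit, while weak lower semicontinuity of the norm handles the regularization term; together these show the weak limit $\bar u$ minimizes $J_\mu$.

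For the first convergence ($\mu\to\infty$), the key inequality is $J_\mu(u^\mu_N)\le J_\mu(u_N)$. Since $u_N$ satisfies the constraints exactly, its fidelity term vanishes and the inequality collapses to
$$\sum_{n=1}^N|[\FT(u^\mu_N),\phi_n]-[f,\phi_n]|^2+\frac{1}{\mu}\|u^\mu_N\|_\UT^2\le\frac{1}{\mu}\|u_N\|_\UT^2.$$
Two consequences follow: $\|u^\mu_N\|_\UT\le\|u_N\|_\UT$ uniformly in $\mu$, and the fidelity term is bounded by $\frac{1}{\mu}\|u_N\|_\UT^2\to 0$. Given any sequence $\mu_k\to\infty$, boundedness yields a weak limit $\tilde u$; Assumption \ref{assu11} forces $[\FT(\tilde u),\phi_n]=[f,\phi_n]$ for all $n$, so $\tilde u$ is feasible, while weak lower semicontinuity gives $\|\tilde u\|_\UT\le\|u_N\|_\UT$. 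Thus $\tilde u$ is itself a minimum-norm feasible point, and Assumption \ref{assu2} identifies it as $u_N$. Strong convergence then follows from the Radon--Riesz property of Hilbert spaces: the chain $\|\tilde u\|_\UT\le\liminf\|u^{\mu_k}_N\|_\UT\le\|u_N\|_\UT=\|\tilde u\|_\UT$ forces $\|u^{\mu_k}_N\|_\UT\to\|u_N\|_\UT$, and weak convergence together with norm convergence yields strong convergence. A standard subsequence argument upgrades this to convergence of the full family.

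For the joint limit $(\mu,N)\to(\infty,\infty)$, I would combine the two estimates. Because $u^*$ is feasible for every $N$, minimality gives $\|u_N\|_\UT\le\|u^*\|_\UT$, hence $\|u^\mu_N\|_\UT\le\|u_N\|_\UT\le\|u^*\|_\UT$, so the whole family is bounded by $\|u^*\|_\UT$. Along any sequence $(\mu_k,N_k)\to(\infty,\infty)$, extract a weak limit $\bar u$. The fidelity bound $\sum_{n=1}^{N_k}|[\FT(u^{\mu_k}_{N_k}),\phi_n]-[f,\phi_n]|^2\le\frac{1}{\mu_k}\|u^*\|_\UT^2\to 0$ controls each fixed-$n$ term once $N_k\ge n$, so Assumption \ref{assu11} gives $[\FT(\bar u),\phi_n]=[f,\phi_n]$ for every $n$. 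Assumption \ref{assu1} then extends this to $[\FT(\bar u),\phi]=[f,\phi]$ for all $\phi\in\BT^*$ by testing against weak* approximants in $\text{Span}\{\phi_j\}$, so $\bar u$ solves \eqref{nonlinearproblem} and equals $u^*$ by uniqueness. The Radon--Riesz argument again promotes weak convergence to strong, and the subsequence principle gives convergence of the full family.

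The main obstacle I anticipate is the joint limit in the last step: one must ensure the weak limit genuinely satisfies the full weak formulation and not merely the finitely many constraints. This requires chaining Assumption \ref{assu11} (to pass $\FT$ through the weak limit) with Assumption \ref{assu1} (to go from finitely many test functions to a dense subset of $\BT^*$), exactly as in Theorem \ref{convergeNOR}; by comparison, the remaining steps are routine.
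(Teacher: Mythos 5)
Your proposal is correct and follows essentially the same route as the paper: the comparison inequality $J_\mu(u^\mu_N)\le J_\mu(u_N)$ (resp.\ $\le J_\mu(u^*)$) to get the uniform bound and the vanishing fidelity term, weak compactness plus Assumption \ref{assu11} to identify the weak limit as feasible, Assumption \ref{assu2} (resp.\ uniqueness of $u^*$ together with Assumption \ref{assu1}) to pin it down, and the Radon--Riesz argument for strong convergence. No gaps.
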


A diagram presents the convergence relation among $u^*$(\textbf{Nonlinear Problem} \eqref{nonlinearproblem}), $u_N$(\textbf{nonlinear optimal recovery} \eqref{NOR}), and $u^\mu_N$ (\textbf{regularization problem} \eqref{regularization}) is given in Figure \ref{convergence2}.
\begin{figure}[h]
    \centering
    \includegraphics[width=0.7\linewidth]{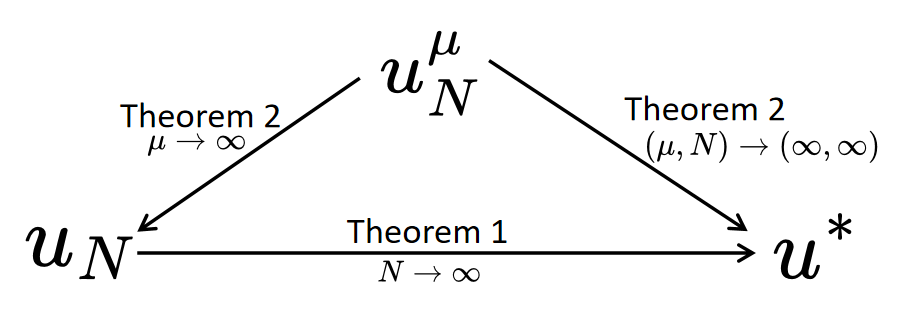}
    \caption{Convergence Relation}
    \label{convergence2}
\end{figure}
\section{Optimality and Relaxation Formulation}
\label{sec:OandQNOR}
In the previous section, we formulated the nonlinear optimal recovery problem for abstract nonlinear equations and demonstrated its desirable convergence properties. This section will delve into the optimality condition and the numerical solvability of the nonlinear optimal recovery  \eqref{NOR}, leading to the proposal of the relaxed nonlinear optimal recovery based on this discussion. In order to consider the finite-dimensional solution

Firstly, subsection \ref{subsec:OC} derives the optimality conditions for the nonlinear optimal recovery problem. Subsequently, subsection \ref{subsec:FDS} will present the condition for the problem to have a finite-dimensional solution. Lastly, subsection \ref{subsec:QNOR} introduces the relaxed nonlinear optimal recovery algorithm, and \ref{subsubsec:CAquasi} will establish its convergence result.
\subsection{Optimality Condition}
\label{subsec:OC}
In classical optimal recovery theory, the representer theorem indicates that the solution will possess a finite-dimensional structure. Analogously, we are interested in investigating the structure of the solution for nonlinear optimal recovery problems \eqref{NOR}. To this end, we will present the following proposition, which characterizes the optimality conditions for the nonlinear optimal recovery problem in any general Hilbert space $\UT$.
\begin{proposition}
\label{OCNonlinear}
     For the Hilbert space $\UT$, by Riesz representation theorem there are $\{\psi(u,\phi)\}\subset\UT$, such that $\forall v,u\in\UT$ and $\forall \phi\in\BT^*$
    $$\langle \psi(u,\phi),v\rangle_\UT=[D_u\FT(u)v,\phi],$$
    where $D_u\FT(u)$ is the Fr\'{e}chet derivative of the mapping $\FT:\UT\rightarrow\BT$ at the point $u$, which is a linear mapping between $\UT$ and $\BT$. Then the solution $u_N$ of the nonlinear optimal recovery problem  \eqref{NOR} satisfies the following condition:
    \begin{equation}
    \label{representerNOR}
        u_N=\sum_{n=1}^N\lambda_n\psi(u_N,\phi_n)
    \end{equation}
    \begin{proof}
        Noticing that we can change the objective function to $\|u\|_\UT^2$ without changing the solution. Then the Lagrangian of the nonlinear optimal recovery problem  \eqref{NOR} is: 
$$\|u\|_\UT^2-\sum_{n=1}^N\lambda_n\left([\FT(u),\phi_n]-[f,\phi_n]\right)$$
Then the Fr\'{e}chet derivative w.r.t $u$ is the following linear operator on $\UT$:
$$[R_\UT^{-1}u,\cdot]_\UT-\sum_{n=1}^N\lambda_n [D_u\FT(u)(\cdot),\phi_n]$$
where $D_u\FT(u)$ is the Fr\'{e}chet derivative of the mapping $\FT:\UT\rightarrow\BT$ at the point $u$, which is a linear mapping between $\UT$ and $\BT$ and $R_\UT$ is the Riesz mapping that maps the element in $\UT^*$ to its Riesz representer in $\UT$. Applying $R_\UT$ we have the optimality condition:
$$u=\sum_{n=1}^N\lambda_n \psi_n(u)$$
where $\psi_n(u)$ is the Riesz representer of $u$
satisfying:
$$\langle \psi_{n}(u),v\rangle_\UT=[D_u\FT(u)v,\phi_n],\ \ n=1,...,N$$
    \end{proof}
    
\end{proposition}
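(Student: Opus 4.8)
The plan is to treat \eqref{NOR} as an equality-constrained minimization on the Hilbert space $\UT$ and to extract its first-order (Lagrange multiplier) stationarity condition at the known minimizer $u_N$. Since $t\mapsto t^2$ is strictly increasing on $[0,\infty)$, the minimizer of $\|u\|_\UT$ coincides with that of the Fr\'echet-differentiable objective $J(u):=\|u\|_\UT^2=\langle u,u\rangle_\UT$, so I would work with $J$ throughout. Collecting the constraints into the map $G:\UT\to\mathbb{R}^N$, $G(u):=([\FT(u),\phi_n]-[f,\phi_n])_{n=1}^N$, the feasible set is $G^{-1}(0)$, and I seek a stationarity condition for $J$ restricted to this set.

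Next I would record the two Fr\'echet derivatives involved. Differentiating $J$ at $u_N$ in a direction $v$ gives $DJ(u_N)v=2\langle u_N,v\rangle_\UT$, whose Riesz representer in $\UT$ is simply $2u_N$. For the $n$-th constraint, the definition of the Fr\'echet derivative $D_u\FT(u_N)$ yields the component derivative $v\mapsto[D_u\FT(u_N)v,\phi_n]$; this is a bounded linear functional on $\UT$, being the composition of the bounded operator $D_u\FT(u_N):\UT\to\BT$ with the fixed $\phi_n\in\BT^*$, so the Riesz representation theorem produces a unique $\psi(u_N,\phi_n)\in\UT$ with $\langle\psi(u_N,\phi_n),v\rangle_\UT=[D_u\FT(u_N)v,\phi_n]$, which is exactly the operator $\psi$ in the statement. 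The Lagrange condition $DJ(u_N)=\sum_{n=1}^N\tilde\lambda_n\,D(G_n)(u_N)$ in $\UT^*$ then becomes, after applying the Riesz map $R_\UT$, the identity $2u_N=\sum_{n=1}^N\tilde\lambda_n\psi(u_N,\phi_n)$; absorbing the factor $2$ into $\lambda_n:=\tilde\lambda_n/2$ gives the claimed representation \eqref{representerNOR}.

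The step I expect to be the main obstacle is the existence of the multipliers $\{\lambda_n\}$ itself, which in infinite dimensions is not automatic and is where the formal Lagrangian computation must be justified. The clean route is to invoke the Lyusternik--Graves / Banach-space Karush--Kuhn--Tucker theorem: provided $\FT$ (hence $G$) is continuously Fr\'echet differentiable near $u_N$ and a constraint qualification holds at $u_N$---namely that $DG(u_N):\UT\to\mathbb{R}^N$ is surjective, equivalently that the functionals $\{v\mapsto[D_u\FT(u_N)v,\phi_n]\}_{n=1}^N$, i.e.\ the representers $\{\psi(u_N,\phi_n)\}$, are linearly independent---the stationarity of $J$ on $G^{-1}(0)$ forces $DJ(u_N)$ to lie in the span of the constraint gradients, delivering the multiplier rule above. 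I would therefore either append such a regularity-and-independence hypothesis or remark that, under degeneracy, one obtains the abnormal form carrying a multiplier on $DJ$; once the multipliers exist, the remaining Riesz identifications are exactly the routine computations carried out above.
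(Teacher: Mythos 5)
Your proposal follows essentially the same route as the paper's proof: replace the objective by $\|u\|_\UT^2$, form the Lagrangian for the equality-constrained problem, identify the constraint derivatives $v\mapsto[D_u\FT(u_N)v,\phi_n]$ with their Riesz representers $\psi(u_N,\phi_n)$, and read off the stationarity condition $u_N=\sum_n\lambda_n\psi(u_N,\phi_n)$. The only difference is that you explicitly flag the need for a constraint qualification (surjectivity of $DG(u_N)$, i.e.\ linear independence of the representers) to justify the multiplier rule in infinite dimensions, a point the paper's formal Lagrangian computation passes over silently; this is a legitimate refinement rather than a different argument.
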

The solution to the regularization framework \eqref{regularization} exhibits an analogous structure, as demonstrated in the following corollary:
\begin{corollary}
The solution $u_N$ of the nonlinear optimal recovery problem  \eqref{regularization} satisfies the following condition:
    \begin{equation}
    \label{representerNOR}
        u_N=\sum_{n=1}^N\lambda_n\psi(u_N,\phi_n)
    \end{equation}
    where $\psi:\UT\times\BT^*\rightarrow \UT$ is
    the Riesz mapping satisfying $\langle \psi(u,\phi),v\rangle_\UT=[D_u\FT(u)v,\phi]$.
    
\end{corollary}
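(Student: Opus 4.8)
The plan is to mirror the argument of Proposition \ref{OCNonlinear}, but to exploit the fact that the regularization problem \eqref{regularization} is an \emph{unconstrained} minimization. Consequently, no Lagrange multipliers need be introduced: the representer structure emerges directly from the first-order stationarity condition, and the coefficients $\lambda_n$ turn out to be pinned down explicitly in terms of the residuals rather than left free. First I would invoke Theorem \ref{convergeregu} to guarantee that a minimizer $u^\mu_N$ exists in $\UT$, so that writing down the Euler--Lagrange condition is meaningful, and I would rely on the same standing Fr\'{e}chet-differentiability of $\FT$ already used to define $\psi$ in Proposition \ref{OCNonlinear}.

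Next I would write the objective as
$$J(u)=\sum_{n=1}^N\left([\FT(u),\phi_n]-[f,\phi_n]\right)^2+\frac{1}{\mu}\|u\|^2_\UT$$
and compute its Fr\'{e}chet derivative at the minimizer in an arbitrary direction $v\in\UT$. By the chain rule the derivative of the $n$-th fidelity term is $2\left([\FT(u),\phi_n]-[f,\phi_n]\right)[D_u\FT(u)v,\phi_n]$, while the regularization term contributes $\tfrac{2}{\mu}\langle u,v\rangle_\UT$. Substituting the defining identity $[D_u\FT(u)v,\phi_n]=\langle\psi(u,\phi_n),v\rangle_\UT$ lets me collect everything into a single inner product against $v$,
$$DJ(u)v=2\Big\langle \sum_{n=1}^N\left([\FT(u),\phi_n]-[f,\phi_n]\right)\psi(u,\phi_n)+\tfrac{1}{\mu}u,\;v\Big\rangle_\UT.$$
Imposing $DJ(u^\mu_N)v=0$ for all $v\in\UT$ forces the bracketed element to vanish, which after multiplying by $\mu$ yields the asserted representation
$$u^\mu_N=\sum_{n=1}^N\lambda_n\,\psi(u^\mu_N,\phi_n),\qquad \lambda_n=-\mu\left([\FT(u^\mu_N),\phi_n]-[f,\phi_n]\right).$$

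The computation is essentially routine, so I do not anticipate a genuine obstacle. The only points requiring care are the appeal to Theorem \ref{convergeregu} for existence of the minimizer and the differentiability of $\FT$ that makes $D_u\FT(u)$ and hence $\psi(u,\phi_n)$ well defined, exactly as in Proposition \ref{OCNonlinear}. The conceptual payoff worth emphasizing is that, in contrast to the constrained problem \eqref{NOR} where the $\lambda_n$ are free multipliers enforcing the measurement constraints, here they are determined by the residuals and carry the explicit factor $-\mu$; this makes the structural analogy between the two solutions transparent while clarifying exactly how the regularization parameter enters the representation.
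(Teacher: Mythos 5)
Your proof is correct and follows essentially the approach the paper intends: the paper states this corollary without proof as an analogue of Proposition \ref{OCNonlinear}, and the natural argument is exactly the first-order stationarity condition of the unconstrained objective that you compute. Your additional observation that the coefficients are pinned down as $\lambda_n=-\mu\left([\FT(u^\mu_N),\phi_n]-[f,\phi_n]\right)$ is a correct and worthwhile refinement beyond what the paper records.
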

It should be noted that functions $\{\psi(u,\phi_n)\}_{n=1}^N$ generally depend on $u_N$, resulting in the optimality condition  \eqref{representerNOR} being a nonlinear equation in $u_N$. Consequently, this does not constitute a representer theorem in the traditional sense since the solution space is not finite-dimensional.
\subsection{Finite Dimensional Solution}
\label{subsec:FDS}
For nonlinear systems in infinite dimensions, it is in general not numerically solvable. Therefore, we focus on the case where the solution remains finite-dimensional so that the problem becomes numerically solvable.
\subsubsection{Finite Dimensional Condition}
To ensure that the solution in Proposition \ref{OCNonlinear} is spanned by a finite number of functions, the following \textbf{Finite Dimensionality Condition} can be imposed:
\begin{mygraybox}
\textbf{Finite Dimensionality Condition}: Let $\psi:\UT\times\BT^*\rightarrow\UT$ be the Riesz representation mapping satisfies $\langle\psi(u,\phi_n),v\rangle_\UT=[D_u\FT(u)v,\phi_n]$ for $\forall v\in\UT$, there is a constant $L\in\mathbb{N}$ such that:
\begin{equation}
\label{conditionfinite}
    \psi(u,\phi_n)=\sum_{l=1}^{L}c_{l}(u,\phi_n)\psi_{l}(\phi_n),\qquad n=1,...,N
\end{equation}
where $c_{l}:\UT\times\BT^*\rightarrow\mathbb{R}$ are $L$ scalar functions and $\psi_{l}:\BT^*\rightarrow\UT$ are independent of $u$. 
\end{mygraybox}
We will first demonstrate that when this condition is met, our nonlinear optimal recovery problem (Equation \eqref{NOR}) can be transformed into a finite-dimensional optimization problem. According to Proposition \ref{OCNonlinear}, the solution to the nonlinear optimal recovery problem satisfies:
$$ u_N=\sum_{n=1}^N\lambda_n\psi(u_N,\phi_n)=\sum_{n=1}^N\sum_{l=1}^{L}\underbrace{\lambda_nc_{l}(u,\phi_n)}_{Scalar}\psi_{l}(\phi_n)=\sum_{n=1}^N\sum_{l=1}^{L} \lambda_{n,l}\psi_{l}(\phi_n) $$

Therefore minimization of $\|u_N\|_\UT$ with the prescribed finite measurements is equivalent to minimization in
the span $\{\psi_{l}(\phi_n)\}$. Then the nonlinear optimal recovery can be converted to:
$$    
\begin{aligned}
\min_{\Lambda\in\mathbb{R}^{NL}}&\ \ \ \ \ \ \ \ \ \|\sum_{l=1}^{L} \lambda_{n,l}\psi_{l}(\phi_n)\|_\mathcal{U}\\
s.t&\ \ [\FT(\sum_{l=1}^{L} \lambda_{n,l}\psi_{l}(\phi_n)),\phi_n]=[f,\phi_n],\ \ n=1,...,N
\end{aligned}
$$
This results in a finite-dimensional optimization for the unknowns $\{\lambda_{n,l}\}$ that can be numerically solved.

Now we turn to discuss cases where the finite-dimensionality condition will be satisfied. When $\FT$ is a linear operator $\LT$, its Fréchet derivative $D_u\FT(u)$ is equal to $\LT$ for all $u$. Consequently, for any $\phi \in \BT^*$, the functional $\psi(u,\phi)$ satisfies:
$$\langle \psi(u,\phi), v \rangle_\UT = [\LT v, \phi]=[v,\LT^*\phi]=\langle R_\UT \LT^*\phi_n, v \rangle_\UT$$
So we have $\psi(u,\phi_n)=R_\UT \LT^*\phi_n$ and it is independent of $u$. Therefore, any linear operator satisfies the finite-dimensionality condition \eqref{conditionfinite} for $L=1$ and $\psi_1(\phi_n)=R_\UT \LT^*\phi_n$.

We note that requiring the condition \eqref{conditionfinite} to hold for all test functions will significantly limit the class of nonlinear problems to which the framework is applicable. However, there are many more instances where this condition \eqref{conditionfinite} holds for a certain subset of test functions. Namely, for some nonlinear operators $\FT$ there are a subset $\BT^*_0\subset\BT^*$, $L$ scalar functions $c_l:\UT\times\BT^*\rightarrow\mathbb{R}$ and $L$ corresponding functions $\psi_l:\BT^*\rightarrow\UT$ such that for any $\phi\in\BT^*_0$:
\begin{equation}
\label{Quasicondition}
[D_u\FT(u)v,\phi]=\sum_{l=1}^{L}c_l(u,\phi)\langle\psi_l(\phi),v\rangle_\UT,\ \ \forall v\in\UT
\end{equation}
then if our test function set $\{\phi_n\}\subset\BT_0^*$, by proposition \ref{OCNonlinear}, the solution of the nonlinear optimal recovery is finite-dimensional.

\subsection{Relaxed Nonlinear Optimal Recovery}
\label{subsec:QNOR}

As discussed in the previous subsection, the nonlinear operator satisfying \eqref{Quasicondition} possesses a finite-dimensional solution only for a certain test function subset $\BT_0^*$. In practice, it is possible that a test function $\{\phi_n\}$ of the measurements might not belong to $\BT_0^*$. Therefore, the following question arises naturally: Is it possible to modify our nonlinear optimal recovery framework in such a way that we can obtain a finite-dimensional solution without sacrificing the convergence property in this case?    

In this subsection, we give an affirmative answer. Namely, we show that if the span of the subset $\BT_0^*$ is dense in the test function space $\BT^* $, some modifications can be made to the nonlinear optimal recovery problem so that the solution of the new problem retains not only a finite-dimensional representation but also the convergence property. 
\subsubsection{Problem Formulation}
To be more specific, assume that for a fixed measurement $\phi_n$, there exist coefficients $\{c_{nm}\}$ and $\{\phi_{mn}\}\subset\BT_0^*$ such that:
$$\left\|\sum_{m=1}^M c_{nm}\phi_{nm}-\phi_n\right\|_{\BT^*}\rightarrow 0, \qquad \text{as}\ \ M\rightarrow\infty.$$

Then, for each $M\geq 1$, we propose the following
approximation as a formulation of the
\textbf{relaxed nonlinear optimal recovery} of scale-$M$:
\begin{equation}
\label{quasiNOR}
\begin{aligned}
\min_{u\in\mathcal{U}}&\ \ \ \ \ \ \ \ \ \|u\|_\mathcal{U}\\
s.t&\ \ \left| [\FT(u),\sum_{m=1}^M c_{mn}\phi_{mn}]-[f,\phi_n]\right|\le \epsilon_{nM},\ n=1,...,N
\end{aligned}
\end{equation}
where, for each $1\leq n\leq N$, the negative parameters $\{\epsilon_{nM}\geq 0\}$, to be specified later, 
form an infinite sequence that converges to 0 as $M\rightarrow\infty$. Assume that the nonlinear operator $\FT$ satisfies the condition \eqref{Quasicondition}, we can show that the solution $u_N^M$ of the relaxed nonlinear optimal recovery problem is always finite-dimensional:
\begin{proposition}
\label{OCquasiNOR}
Assuming that our nonlinear operator satisfies the condition \eqref{conditionfinite}, then the solution $u^M_N$ of the relaxed nonlinear optimal recovery satisfies:
$$u^M_N\in\text{span}\{\psi_l(\phi_{mn}):l=1,...,L;n=1,...,N;m=1,...,M\}.$$
\end{proposition}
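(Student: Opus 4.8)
The plan is to mirror the Lagrangian argument used in the proof of Proposition \ref{OCNonlinear}, adapting it to the inequality constraints of \eqref{quasiNOR}. First I would replace the objective $\|u\|_\UT$ by the equivalent $\|u\|_\UT^2$ (which shares the same minimizer) and rewrite each constraint $|h_n(u)|\le\epsilon_{nM}$, where $h_n(u):=[\FT(u),\sum_{m=1}^M c_{mn}\phi_{mn}]-[f,\phi_n]$, as the pair of one-sided constraints $h_n(u)-\epsilon_{nM}\le 0$ and $-h_n(u)-\epsilon_{nM}\le 0$. Introducing KKT multipliers $\mu_n^\pm\ge 0$, the Lagrangian becomes
$$\|u\|_\UT^2 + \sum_{n=1}^N \mu_n^+\big(h_n(u)-\epsilon_{nM}\big) + \sum_{n=1}^N \mu_n^-\big(-h_n(u)-\epsilon_{nM}\big),$$
and setting its Fr\'echet derivative with respect to $u$ to zero gives the stationarity condition
$$[R_\UT^{-1}u,\cdot]_\UT + \sum_{n=1}^N (\mu_n^+-\mu_n^-)\,[D_u\FT(u)(\cdot),\textstyle\sum_{m=1}^M c_{mn}\phi_{mn}] = 0.$$

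The second step is to exploit the linearity of the duality pairing in its second argument to write, for each fixed $n$ and any $v\in\UT$,
$$[D_u\FT(u)v,\textstyle\sum_{m=1}^M c_{mn}\phi_{mn}] = \sum_{m=1}^M c_{mn}\,[D_u\FT(u)v,\phi_{mn}].$$
Since every $\phi_{mn}\in\BT_0^*$, the finite-dimensionality condition \eqref{conditionfinite} (equivalently \eqref{Quasicondition}) applies to each term, giving $[D_u\FT(u)v,\phi_{mn}]=\sum_{l=1}^L c_l(u,\phi_{mn})\langle\psi_l(\phi_{mn}),v\rangle_\UT$. Collecting the sums, the functional $v\mapsto[D_u\FT(u)v,\sum_m c_{mn}\phi_{mn}]$ has Riesz representer $\sum_{m=1}^M\sum_{l=1}^L c_{mn}\,c_l(u,\phi_{mn})\,\psi_l(\phi_{mn})$, which lies in the finite-dimensional span $V:=\text{span}\{\psi_l(\phi_{mn})\}$. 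Note that a direct orthogonal-projection argument is not available here, precisely because $\FT$ is nonlinear and so the constraint values are not determined by the projection of $u$ onto $V$; this is what forces the first-order/Lagrangian route.

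The final step is to apply the Riesz map $R_\UT$ to the stationarity condition. Because the representer of each derivative term lies in $V$ and $R_\UT R_\UT^{-1}u=u$, this expresses $u^M_N$ as a linear combination of the $\{\psi_l(\phi_{mn})\}$, i.e. $u^M_N\in V$, which is exactly the claim. I expect the main technical obstacle to be the nonsmoothness of the absolute value in the constraint together with the justification of the multipliers: the splitting into two one-sided constraints resolves the nonsmoothness whenever $\epsilon_{nM}>0$ and a constraint is active (so $h_n=\pm\epsilon_{nM}\neq 0$ and $h_n$ is differentiable there), while existence of the $\mu_n^\pm$ requires an appropriate constraint qualification. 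As in Proposition \ref{OCNonlinear}, I would treat the first-order optimality conditions at the formal level, so the sign and factor bookkeeping in collecting the multipliers is routine and does not affect the membership conclusion.
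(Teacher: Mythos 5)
Your proposal is correct and follows essentially the route the paper intends: the paper gives no explicit proof of Proposition \ref{OCquasiNOR}, but its implied argument is exactly the Lagrangian/stationarity computation of Proposition \ref{OCNonlinear} applied to the relaxed constraints, followed by linearity of the pairing and condition \eqref{Quasicondition} applied to each $\phi_{mn}\in\BT_0^*$, which is what you do. Your splitting of each absolute-value constraint into two one-sided KKT constraints is a reasonable (and slightly more careful) formalization of the first-order conditions that the paper treats only at the formal level.
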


Some discussions are presented below to motivate the use of this inequality formulation:
\begin{itemize}
    \item \textbf{Limitations associated with equality constraints} A key property used earlier to prove the convergence of the nonlinear optimal recovery \eqref{NOR} is that the sequence of solutions can be bounded by the true solution $u$ of the nonlinear problem \eqref{nonlinearproblem}. However, if we simply replace the test function with an approximation in $\BT_0$, the original $u^*$ may no longer satisfy the constraints defining the allowed set of solutions. This leads to a challenging issue of how to guarantee that the solution sequence remains bounded.

\item\textbf{Advantages of inequality constraints} To overcome the above obstacle, we employ a relaxation involving inequality constraints along with a careful choice of the parameter $\epsilon_{nM}$, under reasonable assumptions on the nonlinear term. By incorporating these inequality constraints, we can construct the feasible set in such a way that it includes the solution $u^*$. The detailed derivations and justifications of these conditions will be provided in the next subsection. This ensures that the solution sequence can be bounded by u, which is a crucial requirement to establish convergence. 
\end{itemize}

\subsubsection{Convergence Analysis}
\label{subsubsec:CAquasi}
 Firstly, we show that the weak continuity in the assumption \ref{assu1} of $\FT$ also implies boundness. Namely, we have the following proposition
 \begin{proposition}[\textbf{Boundness of $\FT$}]
\label{wtob}
Under the assumption \ref{assu1}, there exists a constant function $C:\mathbb{R}_+\rightarrow\mathbb{R}_+$ such that for any $K>0$ and $u\in\UT$:
$$\|u\|_\UT\le K\ \ \Rightarrow\ \ \|\FT(u)\|_{\BT}\le C(K)$$
\begin{proof}
    Assuming for a given $K$, there is no such constant $C(K)$. Then there exists a function sequence $\{u_n\}$, such that:
    $$\|u_n\|_\UT\le K,\|\FT(u_n)\|_\BT\ge n,\qquad\forall n=1,...$$
    Since $\{u_n\}$ is bounded, there exists a weakly convergent subsequence $\{u_{n_m}\}$. By assumption \ref{assu1}, $\{\FT(u_{n_m})\}$ is weakly convergent, therefore it is bounded, and that is a contradiction.
\end{proof}
 \end{proposition}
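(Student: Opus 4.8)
The plan is to argue by contradiction, exploiting two soft facts from functional analysis: in a Hilbert space every bounded sequence has a weakly convergent subsequence, and in any normed space a weakly convergent sequence is automatically norm bounded. Since the claim is exactly that a weakly continuous $\FT$ sends bounded sets to bounded sets, negating it should produce a sequence confined to a ball of $\UT$ whose images blow up in $\BT$, and weak compactness together with weak continuity should then contradict the norm boundedness of the images.

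First I would fix $K>0$ and assume, for contradiction, that no finite constant $C(K)$ bounds $\|\FT(u)\|_\BT$ uniformly over the closed ball $\{u\in\UT:\|u\|_\UT\le K\}$. Unwinding this negation yields a sequence $\{u_n\}\subset\UT$ with $\|u_n\|_\UT\le K$ for all $n$ but $\|\FT(u_n)\|_\BT\ge n$, so the images are unbounded in norm. Because $\{u_n\}$ lies in a ball of the Hilbert space $\UT$ it is bounded, and hence admits a weakly convergent subsequence $u_{n_m}\rightharpoonup u$ for some $u\in\UT$. Invoking the weak continuity of $\FT$ then gives $\FT(u_{n_m})\rightharpoonup\FT(u)$ in the reflexive Banach space $\BT$.

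To close, I would recall that a weakly convergent sequence in a normed space is norm bounded (via the uniform boundedness principle applied to the functionals in $\BT^*$). This forces $\sup_m\|\FT(u_{n_m})\|_\BT<\infty$, which contradicts $\|\FT(u_{n_m})\|_\BT\ge n_m\to\infty$. The contradiction yields the constant $C(K)$, and letting $K$ range over $\mathbb{R}_+$ defines the claimed function $C:\mathbb{R}_+\to\mathbb{R}_+$.

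The argument is essentially free of computation, so there is no genuine analytic obstacle; the only steps requiring care are the citations of weak sequential compactness of bounded sets (which uses the Hilbert, or at least reflexive, structure of $\UT$) and of norm boundedness of weakly convergent sequences. Both are standard, so the difficulty lies in stating them precisely rather than in any estimate, and one should also note that the proof uses the \emph{weak continuity} hypothesis (Assumption~\ref{assu11}) rather than the density of test functions.
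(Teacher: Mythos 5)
Your proof is correct and is essentially the same as the paper's: the same contradiction argument via weak sequential compactness of the bounded sequence $\{u_n\}$, weak continuity of $\FT$, and norm boundedness of weakly convergent sequences. You are also right that the hypothesis actually used is the weak continuity assumption (Assumption~\ref{assu11}), not the density assumption the proposition's statement cites — that is a label typo in the paper.
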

 \begin{assumption}
\label{assu3} 
 (\textbf{Approximation of $\BT^*_0$}): For any $\phi\in \BT^*$, there exist coefficients $\{c_{m}\}$, a function set $\{\phi^*_{m}\}\subset\BT^*_0$,  and coefficients $\{\epsilon_{\phi, M}\}$ satisfies $\epsilon_{\phi, M}\rightarrow 0$ when $M\rightarrow \infty$, such that
$$
\left\|\sum_{m=1}^M c_{m}\phi^*_{m} - \phi\right\|_{\BT^*}\leq \bar{\epsilon}_{\phi, M} .
$$
\end{assumption}

To simplify the notation, in the case of $\phi=\phi_n$, we denote $\bar{\epsilon}_{\phi_n,M}=\bar{\epsilon}_{nM}$.
\begin{remark}
    The above assumption means that $\sum_{m=1}^M c_{nm}\phi_{nm}$ converge to $\phi_n$ strongly in $\BT^*$. In order words,
 $$\overline{\text{Span}\{\phi:\phi\in\BT^*_0 \}}=\BT^* .$$
 In section \ref{subsec:PEandRFS}, we will give a nontrivial example where assumption \ref{assu3} holds.
\end{remark}

 Building upon our earlier discussion, incorporating both the true and the nonlinear optimal recovery solutions into the feasible set confers advantages for the convergence analysis. We show in the following lemma that this inclusion is achievable with the assumption \ref{assu3} and proposition \ref{wtob}:

\begin{lemma}
\label{lemmafeasible}
    Under assumptions \ref{assu3}, by taking $\epsilon_{nM}=C(\|u^*\|)\bar{\epsilon}_{nM}$ with $C$ defined by proposition \ref{wtob}, we have
    \begin{itemize}
        \item The solution for the nonlinear optimal recovery\eqref{NOR} $u_N$ is in the feasible set of relaxed nonlinear optimal recovery \eqref{quasiNOR} of scale $M$.
        \item The solution for the nonlinear equation \eqref{nonlinearproblem} $u^*$ is in the feasible set of relaxed nonlinear optimal recovery \eqref{quasiNOR} of scale $M$.
    \end{itemize}
\begin{proof}
    By previous proofs, we have shown that $\|u_N\|_\UT\le\|u^*\|_\UT$ for any $N>0$, therefore by proposition \ref{wtob}, we can conclude that:
    $$\|\FT(u^*)\|_\BT\le C(\|u^*\|),\ \ \|\FT(u_N)\|_\BT\le C(\|u^*\|).$$
    For any $\phi_n$, we denote the set of correspondent coefficients and functions in assumption \ref{wtob} as $\{c_{nm}\}$ and $\{\phi_{nm}\}$. Then we have:
    $$\begin{aligned}
        &\left|\left[\FT(u_N),\sum_{m=1}^M c_{mn} \phi_{mn}\right]-[\FT(u_N),\phi_n]\right|\\
        \le&\|\FT(u_N)\|_{\BT}\left\|\sum_{m=1}^M c_{mn} \phi_{mn}-\phi_n\right\|_{\BT*}\\
        \le&C(\|u^*\|)\epsilon_{nM}=\delta_{nM}
    \end{aligned}$$
    And the proof for $u^*$ is similar.
\end{proof}
\end{lemma}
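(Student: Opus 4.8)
The plan is to verify directly that both $u^*$ and $u_N$ satisfy the relaxed inequality constraints appearing in \eqref{quasiNOR} once the tolerance is fixed as $\epsilon_{nM}=C(\|u^*\|)\bar{\epsilon}_{nM}$. The organizing observation is that each of these two functions already satisfies the \emph{exact} equality constraints of the nonlinear optimal recovery: $u^*$ does so because it solves \eqref{nonlinearproblem} against every $\phi\in\BT^*$, and $u_N$ does so by definition of the feasible set of \eqref{NOR}. This lets me replace the datum $[f,\phi_n]$ by $[\FT(\cdot),\phi_n]$ inside each absolute value, so that the constraint residual becomes $\big|[\FT(\cdot),\sum_{m=1}^M c_{mn}\phi_{mn}-\phi_n]\big|$, i.e. the nonlinear term tested against the \emph{approximation error} of the test function rather than against $\phi_n$ itself.

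First I would bound this residual through the duality pairing inequality $|[g,\psi]|\le\|g\|_{\BT}\,\|\psi\|_{\BT^*}$, factoring it into $\|\FT(\cdot)\|_{\BT}$ times $\big\|\sum_{m=1}^M c_{mn}\phi_{mn}-\phi_n\big\|_{\BT^*}$. Assumption \ref{assu3} controls the second factor by $\bar{\epsilon}_{nM}$. For the first factor I would invoke the nonlinear boundedness Proposition \ref{wtob}: since $\|u^*\|_\UT$ trivially bounds itself, and since $\|u_N\|_\UT\le\|u^*\|_\UT$ by the minimum-norm property of $u_N$ together with the feasibility of $u^*$ for \eqref{NOR}, both $\|\FT(u^*)\|_{\BT}$ and $\|\FT(u_N)\|_{\BT}$ are dominated by the \emph{single} constant $C(\|u^*\|)$. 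Multiplying the two bounds yields precisely $C(\|u^*\|)\bar{\epsilon}_{nM}=\epsilon_{nM}$, which places both functions in the feasible set. Because the $u^*$ computation and the $u_N$ computation differ only by the symbol in the argument, I would carry out the $u_N$ case in full and note that the $u^*$ case is verbatim.

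The step I expect to be the crux is exactly the uniform norm control $\|u_N\|_\UT\le\|u^*\|_\UT$ feeding into Proposition \ref{wtob}: without a bound on $\|\FT(u_N)\|_{\BT}$ that is independent of $N$ and $M$, the residual estimate could not be closed by a fixed tolerance, and the entire purpose of relaxing to inequality constraints — rather than simply substituting the approximating test functions into equality constraints — is to enlarge the feasible set just enough to retain both $u^*$ and $u_N$ while still shrinking back to the original constraints. The delicate calibration is therefore the choice of $\epsilon_{nM}$: it must be large enough to absorb $C(\|u^*\|)\bar{\epsilon}_{nM}$ for the feasibility inclusion, yet vanish as $M\to\infty$ so that the limiting problem recovers \eqref{NOR}; the stated value achieves both because $C(\|u^*\|)$ is a fixed constant while $\bar{\epsilon}_{nM}\to 0$.
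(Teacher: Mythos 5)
Your proposal is correct and follows essentially the same route as the paper's proof: both arguments use the equality constraints to rewrite the residual as $\FT(\cdot)$ tested against the approximation error $\sum_m c_{mn}\phi_{mn}-\phi_n$, bound it by the duality-pairing inequality, control $\|\FT(\cdot)\|_{\BT}$ uniformly via Proposition \ref{wtob} together with $\|u_N\|_\UT\le\|u^*\|_\UT$, and control the test-function error via Assumption \ref{assu3}. If anything, you make explicit the substitution of $[f,\phi_n]$ by $[\FT(\cdot),\phi_n]$ that the paper leaves implicit, and your bookkeeping of $\bar{\epsilon}_{nM}$ versus $\epsilon_{nM}$ is cleaner than the paper's own.
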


By leveraging the previous lemma, which guarantees the boundedness of the solution series, we can establish a theorem regarding the convergence of the solution to the relaxed nonlinear optimal recovery ($u^M_N$) toward the solution of the nonlinear optimal recovery ($u_N$) and the solution of the weak form of the partial differential equation ($u^*$).
\begin{theorem}
\label{ConvergeQNOR}
    Under assumptions \ref{assu1}, \ref{assu11}, \ref{assu2} and  \ref{assu3}, the relaxed nonlinear optimal recovery problem \eqref{quasiNOR} with $N$ measurements and scale-$M$ approximation and constants $\{\delta_{nM}=C(\|u^*\|)\epsilon_{nM}\}$ has a solution $u^M_N$ in $\UT$. In addition, we have,
\begin{itemize}
    \item $u^M_N\rightarrow u_N$ strongly in $\UT$ as $M\rightarrow \infty$, where $u_N$ is the unique solution of the nonlinear optimal recovery \eqref{NOR}. 
    \item $u^M_N\rightarrow u^*$ strongly in $\UT$ as $(M,N)\rightarrow(\infty,\infty)$, 
    where $u^*$ is the solution of the nonlinear problem \eqref{nonlinearproblem}.
\end{itemize}
\end{theorem}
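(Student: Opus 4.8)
The plan is to follow the direct method of the calculus of variations for existence, and then to extract weak limits and upgrade weak convergence to strong convergence using the Hilbert-space structure of $\UT$, mirroring the proofs of Theorems \ref{convergeNOR} and \ref{convergeregu} but adding the care demanded by the inequality constraints and the double limit. For the existence of $u_N^M$ at fixed $M,N$: each constraint functional $u\mapsto[\FT(u),\sum_{m=1}^M c_{mn}\phi_{mn}]$ is weakly sequentially continuous by Assumption \ref{assu11}, since $\sum_{m=1}^M c_{mn}\phi_{mn}$ is a single fixed element of $\BT^*$; hence the feasible set is the intersection of finitely many weakly closed preimages and is weakly closed. By Lemma \ref{lemmafeasible} it is also nonempty (it contains both $u_N$ and $u^*$), so the infimum is finite and any minimizing sequence is bounded. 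Extracting a weakly convergent subsequence and invoking weak closedness of the feasible set together with the weak lower semicontinuity of $\|\cdot\|_\UT$ produces a minimizer $u_N^M$.

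For the convergence $u_N^M\to u_N$ as $M\to\infty$ at fixed $N$: feasibility of $u_N$ (Lemma \ref{lemmafeasible}) gives $\|u_N^M\|_\UT\le\|u_N\|_\UT$ for all $M$, so $\{u_N^M\}_M$ is bounded and any subsequence has a weak limit $\bar u$. I would verify that $\bar u$ is feasible for \eqref{NOR} via the key split of each active constraint,
$$\Big[\FT(u_N^M),\textstyle\sum_m c_{mn}\phi_{mn}\Big]-[f,\phi_n]=\Big[\FT(u_N^M),\textstyle\sum_m c_{mn}\phi_{mn}-\phi_n\Big]+\big([\FT(u_N^M),\phi_n]-[f,\phi_n]\big).$$
The first term is bounded by $\|\FT(u_N^M)\|_\BT\,\bar\epsilon_{nM}\le C(\|u_N\|)\,\bar\epsilon_{nM}$ using Proposition \ref{wtob} and Assumption \ref{assu3}, hence vanishes as $M\to\infty$; the second converges to $[\FT(\bar u),\phi_n]-[f,\phi_n]$ by Assumption \ref{assu11}. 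Since the whole left-hand side is $\le\epsilon_{nM}\to0$, this forces $[\FT(\bar u),\phi_n]=[f,\phi_n]$ for every $n$, so $\bar u$ solves \eqref{NOR}; the bound $\|\bar u\|_\UT\le\|u_N\|_\UT$ and the uniqueness Assumption \ref{assu2} then give $\bar u=u_N$. As the limit is independent of the subsequence, $u_N^M\rightharpoonup u_N$, and $\limsup\|u_N^M\|_\UT\le\|u_N\|_\UT$ combined with weak lower semicontinuity yields $\|u_N^M\|_\UT\to\|u_N\|_\UT$; weak convergence plus norm convergence gives strong convergence in $\UT$.

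For the double limit $u_N^M\to u^*$ as $(M,N)\to(\infty,\infty)$: feasibility of $u^*$ (again Lemma \ref{lemmafeasible}) yields the uniform bound $\|u_N^M\|_\UT\le\|u^*\|_\UT$ over all $M,N$. Along any sequence $(M_j,N_j)\to(\infty,\infty)$ I extract a weak limit $\bar u$; for each fixed $n$, once $N_j\ge n$ the $n$-th constraint is present, and the same two-term split (now with the bound $C(\|u^*\|)$) shows $[\FT(\bar u),\phi_n]=[f,\phi_n]$ for every $n$. The main obstacle is upgrading these countably many equalities to $[\FT(\bar u),\phi]=[f,\phi]$ for all $\phi\in\BT^*$ while the measurement index and the approximation scale diverge simultaneously; I resolve this exactly as in Theorem \ref{convergeNOR}, approximating an arbitrary $\phi$ weak*-ly by elements of $\mathrm{Span}\{\phi_j\}$ via Assumption \ref{assu1} and passing to the limit in the pairing against the \emph{fixed} elements $\FT(\bar u),f\in\BT$. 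This identifies $\bar u$ as a weak solution of \eqref{nonlinearproblem}, so $\bar u=u^*$ by uniqueness; the sandwich $\|u^*\|_\UT\le\liminf\|u_{N_j}^{M_j}\|_\UT\le\limsup\|u_{N_j}^{M_j}\|_\UT\le\|u^*\|_\UT$ upgrades weak to strong convergence, and the usual subsequence argument removes the passage to subsequences.
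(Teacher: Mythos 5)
Your proposal is correct and follows essentially the same route as the paper: feasibility of $u_N$ and $u^*$ via Lemma \ref{lemmafeasible} gives the uniform norm bounds, weak compactness plus the splitting of each constraint into an approximation error (controlled by Proposition \ref{wtob} and Assumption \ref{assu3}) and a weak-continuity term (Assumption \ref{assu11}) identifies the weak limit, and weak lower semicontinuity plus the norm sandwich upgrades to strong convergence. The only difference is organizational: in the double limit you first establish $[\FT(\bar u),\phi_n]=[f,\phi_n]$ for each fixed $n$ and then invoke the weak* density of $\mathrm{Span}\{\phi_j\}$, whereas the paper runs a single interleaved $\epsilon/4$ estimate; both are valid and rest on the same ingredients.
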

It is important to note that $u^M_N$ converges to $u$ when $(M,N)\rightarrow(\infty,\infty)$, implying that regardless of the specific relationship between $N$ and $M$. That is, as long as both $M$ and $N$ approach infinity, $u^M_N$ will converge to $u$. A diagram representing the various convergence relations among $u^*$(\textbf{nonlinear Equation} \eqref{nonlinearproblem}), $u_N$(\textbf{nonlinear optimal recovery} \eqref{NOR}), and $u^M_N$(\textbf{relaxed nonlinear optimal recovery} \eqref{quasiNOR}) is given in figure \ref{convergenceQuasi}.
\begin{figure}[!h]
    \centering
    \includegraphics[width=0.7\linewidth]{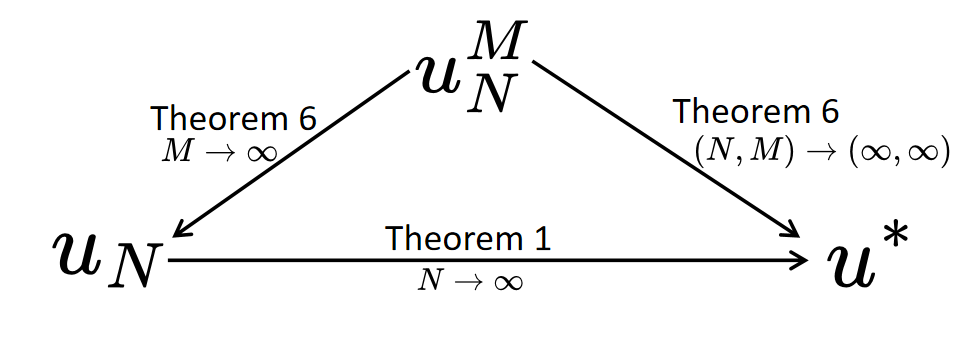}
    \caption{Convergence relation}
    \label{convergenceQuasi}
\end{figure}

\subsection{Finite Measurement Nonlinear Operator}
\label{subsec:FMNO}
The condition(\ref{Quasicondition}) might not appear as easily verifiable. In this subsection, we will define a class of nonlinear operators called \textbf{finite measurement nonlinear operator}, which satisfies the aforementioned condition(\ref{Quasicondition}) and encompasses a wide range of application problems.

We define a nonlinear operator as a \textbf{finite measurement nonlinear operator} if, for any $\phi \in \BT^*_0$, the operator $\FT$ satisfies
\[
[\FT(u),\phi] = F_{\phi}\big(\langle \psi_1(\phi), u \rangle, \ldots, \langle \psi_L(\phi), u \rangle\big), \qquad \forall u \in \UT,
\]
where each $\psi_l(\phi) \in \UT^*$ depends on $\phi$, and $F_\phi$ is a function mapping $\mathbb{R}^L$ to $\mathbb{R}$. 
\subsubsection{Some Examples}
Some examples of finite measurement nonlinear operators are listed below:

\begin{itemize}
    \item \textbf{Function Space Setting}:\\
    Suppose $\UT$ and $\BT$ are both function spaces over a domain $\Omega$, and the test function set $\BT^*_0$ consists of point evaluation functionals on $\Omega$. Assume each $\psi_l(\phi)$ is a linear operator composed with a point evaluation. Then the nonlinear operator $\FT$ takes the form
    \[
    \FT(u)(x) = F\big(L_1 u(x), \ldots, L_Q u(x),x\big), \qquad \forall x \in \Omega,\ \forall u \in \UT,
    \]
    where $L_1, \ldots, L_Q$ are linear operators.\\
    This setting covers a wide range of problems, including nonlinear differential equations, integral equations, and operator equations. We will discuss this case in detail in Section~\ref{sec:NEinFS}.
    \item \textbf{Neural Operator Setting}:\\
    Suppose $\BT$ is a function space over a domain $\Omega$, and the test function set $\BT^*_0$ consists of appropriate evaluations. Assume that $F_x$ is a neural network function. In previous work~\cite{lanthaler2023parametric} on operator learning, a neural network-type operator is defined as
    \[
    \FT(u)(x) = F_x\big(\langle u, \psi_1 \rangle, \ldots, \langle u, \psi_L \rangle\big), \qquad \forall x \in \Omega,\ \forall u \in \UT,
    \]
    where $\psi_1, \ldots, \psi_L$ are test functions in $\BT^*$, and $F_x$ is a neural network function.\\
    This definition includes most operator learning frameworks, such as PCANet~\cite{bhattacharya2021model}, DeepONet~\cite{lu2021learning}, and NOMAD~\cite{seidman2022nomad}.
\end{itemize}
\subsubsection{Finite Dimensional Solution}
In this subsection, we show that any finite measurement nonlinear operator satisfies condition~\eqref{Quasicondition}. We summarize this result in the following lemma:
\begin{lemma}
\label{lemma:finitedim}
Let $\FT$ be a finite measurement nonlinear operator. Then $\FT$ satisfies condition~\eqref{Quasicondition}.
\end{lemma}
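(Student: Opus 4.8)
The plan is to differentiate the defining identity of a finite measurement nonlinear operator and read off the coefficients $c_l$ directly from the chain rule, so that no estimates are needed. Fix $\phi \in \BT^*_0$ and regard the scalar map $u \mapsto [\FT(u),\phi]$ as the composition of $\FT$ with the functional $\phi$. Since $\phi$ is a bounded linear functional on $\BT$ and $\FT$ is Fr\'echet differentiable (differentiability of $\FT$ is already presupposed by the operator $\psi$ in Proposition \ref{OCNonlinear}), the chain rule gives
$$D_u\big([\FT(u),\phi]\big)\,v = [\,D_u\FT(u)\,v,\ \phi\,], \qquad \forall v \in \UT,$$
so it suffices to compute the left-hand side from the finite measurement representation and match it to the right-hand side of \eqref{Quasicondition}.

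First I would use the Riesz isomorphism $R_\UT:\UT^*\to\UT$ to identify each $\psi_l(\phi)\in\UT^*$ with its representer, so that the pairing $\langle \psi_l(\phi),u\rangle$ appearing in the definition becomes the Hilbert inner product $\langle\cdot,\cdot\rangle_\UT$; this is exactly the form demanded by \eqref{Quasicondition}. Writing $\Lambda_\phi:\UT\to\mathbb{R}^L$ for the bounded linear map $\Lambda_\phi(u)=\big(\langle \psi_1(\phi),u\rangle_\UT,\ldots,\langle\psi_L(\phi),u\rangle_\UT\big)$, the defining identity reads $[\FT(u),\phi]=F_\phi(\Lambda_\phi(u))$. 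Because $\Lambda_\phi$ is linear, its Fr\'echet derivative in a direction $v$ is $\Lambda_\phi$ itself, with $l$-th component $\langle\psi_l(\phi),v\rangle_\UT$. Applying the finite-dimensional chain rule to $F_\phi\circ\Lambda_\phi$ then yields
$$D_u\big([\FT(u),\phi]\big)\,v = \sum_{l=1}^L \frac{\partial F_\phi}{\partial a_l}\big(\Lambda_\phi(u)\big)\,\langle \psi_l(\phi),v\rangle_\UT.$$
Combining the two displays and setting $c_l(u,\phi):=\frac{\partial F_\phi}{\partial a_l}\big(\Lambda_\phi(u)\big)$ produces precisely \eqref{Quasicondition}, with the operators $\psi_l:\BT^*_0\to\UT$ being the Riesz-identified functionals from the definition, which are independent of $u$ as required.

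The only nonroutine point, and the main thing to pin down, is the differentiability hypothesis on the outer function: the last display requires each $F_\phi$ to be (continuously) differentiable on $\mathbb{R}^L$. I would therefore record this as a standing regularity assumption on the class of finite measurement nonlinear operators; it is the natural companion to assuming that $D_u\FT$ exists, since along the range of the finite map $\Lambda_\phi$ the differentiability of $\FT$ and that of $F_\phi$ are equivalent. Everything beyond this is a direct application of the chain rule together with the Riesz identification, so the proof is short and contains no analytic obstacles.
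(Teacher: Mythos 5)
Your proposal is correct and follows essentially the same route as the paper: both differentiate the defining identity $[\FT(u),\phi]=F_\phi(\langle\psi_1(\phi),u\rangle,\ldots,\langle\psi_L(\phi),u\rangle)$ via the chain rule and read off $c_l(u,\phi)=\partial_l F_\phi(\cdot)$ as the scalar coefficients in \eqref{Quasicondition}. Your explicit flagging of the differentiability of $F_\phi$ as a needed hypothesis is a fair point the paper leaves implicit, but it does not change the argument.
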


\begin{proof}
    For any $\phi\in\BT^*_0$, by definition we have:
    \[
    [\FT(u),\phi] = F_{\phi}\big(\langle \psi_1(\phi), u \rangle, \ldots, \langle \psi_L(\phi), u \rangle\big), \qquad \forall u \in \UT,
    \]
    Computing the Frech\'{e}t derivative w.r.t to $u$:
    \[
    [D_u\FT(u)v,\phi] = \sum_{l=1}^L \partial_lF_{\phi}\big(\langle \psi_1(\phi), u \rangle, \ldots, \langle \psi_L(\phi), u \rangle\big)\langle\psi_l(\phi), v\rangle, \qquad \forall u,v \in \UT,
    \]
    Noticing that $\partial_lF_{\phi}$ maps $\mathbb{R}^L$ to $\mathbb{R}$, this is exactly condition\eqref{Quasicondition}.
\end{proof}
\subsubsection{Convergence Analysis--Weak Continuity}
In Theorems~\ref{convergeNOR}, \ref{convergeregu}, and \ref{ConvergeQNOR}, one of the crucial assumptions is Assumption~\ref{assu11}, which requires the weak continuity of the nonlinear operator. However, weak continuity is generally difficult to verify. In this subsection, we show that for finite measurement nonlinear operators, this condition can be reduced to the continuity of $F_\phi$ and Assumption~\ref{assu3}. Specifically, we have the following assumption and lemma:
\begin{assumption}
\label{assu6}
    For the finite measurement nonlinear operator $\FT$, the function $F_\phi:\mathbb{R}^L\rightarrow\mathbb{R}$ is continuous for every $\phi\in\BT^*_0$.
\end{assumption}
\begin{lemma}
\label{lemma:wc}
    Let $\FT$ be a finite measurement nonlinear operator satisfying Assumption~\ref{assu6}, and suppose the test function set $\BT^*_0$ satisfies Assumption~\ref{assu3}. Then $\FT$ satisfies Assumption~\ref{assu11}; that is, $\FT$ is weakly continuous.
\end{lemma}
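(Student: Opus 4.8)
The plan is to establish weak continuity directly from the defining representation, working outward from the special test functions in $\BT^*_0$ to all of $\BT^*$. Recall that $\FT(u_n)\rightharpoonup\FT(u)$ in $\BT$ means exactly that $[\FT(u_n),\phi]\to[\FT(u),\phi]$ for every $\phi\in\BT^*$, so it suffices to verify this scalar convergence for an arbitrary weakly convergent sequence $u_n\rightharpoonup u$ in $\UT$. First I would fix $\phi\in\BT^*_0$ and use the finite measurement form $[\FT(u_n),\phi]=F_\phi(\langle\psi_1(\phi),u_n\rangle,\ldots,\langle\psi_L(\phi),u_n\rangle)$. Since each $\psi_l(\phi)$ induces a bounded linear functional on $\UT$ and $u_n\rightharpoonup u$, every argument satisfies $\langle\psi_l(\phi),u_n\rangle\to\langle\psi_l(\phi),u\rangle$. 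Continuity of $F_\phi$ (Assumption \ref{assu6}) then yields $[\FT(u_n),\phi]\to[\FT(u),\phi]$ for each $\phi\in\BT^*_0$, and by linearity of the pairing the same holds for every $\phi\in\mathrm{Span}(\BT^*_0)$.

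Next I would promote this to an arbitrary $\phi\in\BT^*$ using the density supplied by Assumption \ref{assu3}. Given $\phi$ and a tolerance, I would choose $\tilde\phi=\sum_m c_m\phi^*_m\in\mathrm{Span}(\BT^*_0)$ with $\|\phi-\tilde\phi\|_{\BT^*}$ small and split
$$[\FT(u_n),\phi]-[\FT(u),\phi]=[\FT(u_n),\phi-\tilde\phi]+\big([\FT(u_n),\tilde\phi]-[\FT(u),\tilde\phi]\big)+[\FT(u),\tilde\phi-\phi].$$
The middle term tends to $0$ as $n\to\infty$ by the previous step, while the two outer terms are controlled by $\|\FT(u_n)\|_{\BT}\|\phi-\tilde\phi\|_{\BT^*}$ and $\|\FT(u)\|_{\BT}\|\phi-\tilde\phi\|_{\BT^*}$ via the duality bound. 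A standard three-$\varepsilon$ argument then closes the estimate, provided the norms $\|\FT(u_n)\|_{\BT}$ are uniformly bounded in $n$.

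The hard part is precisely this uniform bound on $\{\FT(u_n)\}$. Since $u_n\rightharpoonup u$, the sequence $\{u_n\}$ is norm-bounded, say $\|u_n\|_\UT\le K$, so the natural move is to invoke Proposition \ref{wtob} to obtain $\|\FT(u_n)\|_{\BT}\le C(K)$. I would flag that some care is warranted here: continuity of $F_\phi$ only guarantees, for each fixed $\phi$, that $\{[\FT(u_n),\phi]\}_n$ is bounded (the arguments $\langle\psi_l(\phi),u_n\rangle$ range over a compact box on which $F_\phi$ is bounded), i.e.\ pointwise boundedness on the dense set $\mathrm{Span}(\BT^*_0)$ rather than on all of $\BT^*$; consequently the Banach--Steinhaus principle does not apply verbatim, and the naive normalization argument $y_n=\FT(u_n)/\|\FT(u_n)\|_{\BT}$ fails, since a weak limit may have strictly smaller norm and so yields no contradiction. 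The cleanest route is therefore to take the uniform bound directly from Proposition \ref{wtob}; with that in hand, the three-$\varepsilon$ estimate of the second paragraph completes the proof that $\FT(u_n)\rightharpoonup\FT(u)$, which is exactly Assumption \ref{assu11}.
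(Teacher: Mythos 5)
Your argument follows the same route as the paper's: approximate $\phi$ by an element $\tilde\phi$ of $\mathrm{Span}(\BT^*_0)$, split $[\FT(u_n),\phi]-[\FT(u),\phi]$ into three terms, handle the middle term via the finite-measurement representation (weak convergence of the scalar arguments $\langle\psi_l(\phi),u_n\rangle$ plus continuity of $F_\phi$ from Assumption \ref{assu6}), and control the two outer terms by $\|\FT(u_n)\|_{\BT}\,\|\phi-\tilde\phi\|_{\BT^*}$. You are in fact more careful than the paper at the one delicate point, namely the uniform bound on $\|\FT(u_n)\|_{\BT}$: you correctly observe that continuity of $F_\phi$ only yields pointwise boundedness of $[\FT(u_n),\cdot]$ on the dense subspace $\mathrm{Span}(\BT^*_0)$, which is not enough to invoke Banach--Steinhaus.

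However, your resolution of that point --- taking the uniform bound from Proposition \ref{wtob} --- is circular as the paper stands. The proof of Proposition \ref{wtob} obtains boundedness of $\FT$ by extracting a weakly convergent subsequence $u_{n_m}$ and asserting that $\FT(u_{n_m})$ is weakly convergent; that step is exactly weak continuity (Assumption \ref{assu11}, mislabeled there as Assumption \ref{assu1}). Since Assumption \ref{assu11} is precisely the conclusion of Lemma \ref{lemma:wc}, Proposition \ref{wtob} is not available inside this proof. The paper's own argument has the same unaddressed issue: it simply writes that ``$\FT$ is bounded'' so $\{\FT(u_n)\}$ is a bounded sequence, with no independent justification. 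To close the gap one needs a source of the bound that does not pass through weak continuity --- for instance an explicit hypothesis that $\FT$ maps bounded sets of $\UT$ to bounded sets of $\BT$, or a quantitative strengthening of Assumption \ref{assu6} giving control of $\sup_{\phi}|F_\phi|$ over bounded arguments sufficient to bound $\|\FT(u_n)\|_{\BT}$. As written, both your proof and the paper's establish only that $[\FT(u_n),\phi]\to[\FT(u),\phi]$ for $\phi\in\mathrm{Span}(\BT^*_0)$, together with the conditional statement that norm-boundedness of $\{\FT(u_n)\}$ would upgrade this to weak convergence in $\BT$.
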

\begin{proof}
    Since $\BT^*_0$ is dense in $\BT^*$, for any $\phi\in\BT^*$ there are a sequence $$\bar{\phi}_K=\sum_{k=1}^Kc_k\phi_k\rightarrow\phi,\ \ \ \ K\rightarrow\infty$$
    where $\{\phi_k\}\subset\BT^*_0$.

    By triangle inequality, we have:
    $$|[\FT(u_n),\phi]-[\FT(u),\phi]|\le|[\FT(u_n),\phi]-[\FT(u_n),\tilde{\phi}_K]|+|[\FT(u_n),\tilde{\phi}_K]-[\FT(u),\tilde{\phi}_K]|+|[\FT(u),\phi]-[\FT(u),\tilde{\phi}_K]|$$
    and we want to show that for any $\epsilon>0$, there is a $n,k$ such that the RHS quantity is less than $\epsilon$.
    
    Since $\{u_n\}$ is a weakly convergence sequence, therefore it is bounded, also by $\FT$ is bounded, $\{\FT(u_n)\}$ is a bounded sequence. Then there exist a $K$ such that $$ |[\FT(u_n),\phi]-[\FT(u_n),\tilde{\phi}_K]|\le\frac{\epsilon}{3},\quad |[\FT(u),\phi]-[\FT(u),\tilde{\phi}_K]|\le\frac{\epsilon}{3}$$
    And for each $\phi_k$ with $k=1,...,K$, we have $$[\FT(u_n),\phi_k]=F_{\phi_k}([u_n,\psi_1],...,[u_n,\psi_L])\rightarrow F_{\phi_k}([u,\psi_1],...,[u,\psi_L])=[\FT(u),\phi_k] $$
    by the weak convergence of $u_n$ and the continuity of $F_{\phi_k}$. Therefore there exists a $n$ such that:
    $$|[\FT(u_n),\tilde{\phi}_K]-[\FT(u),\tilde{\phi}_K]|\le\frac{\epsilon}{3}$$
    Combining all these, we proved the result.
\end{proof}
\section{Nonlinear Optimal Recovery in Reproducing Function Spaces}
\label{sec:NEinFS}

In this section, we focus on the \textbf{function space setting} of \textbf{finite measurement nonlinear operators}. This encompasses a wide range of application problems. Specifically, we will study the \textbf{nonlinear equation in reproducing function spaces}, which can represent many widely known examples.

In Subsection \ref{subsec:NEinFS}, we first
introduce the nonlinear equation problem in a general function space related to functions defined on an open domain $\Omega$. In Subsection\ref{subsec:PEandRFS}, we define the point evaluation and reproducing function spaces and present two observations related to the condition \eqref{conditionfinite} and assumption \ref{assu3}. Moreover, in Subsection \ref{subsec:FDC} we present some cases where our nonlinear optimal recovery  \eqref{NOR} and relaxed nonlinear optimal recovery  \eqref{quasiNFOR} admit a finite-dimensional solution.

\subsection{Nonlinear Equation in function spaces}
\label{subsec:NEinFS}
Given a domain $\Omega$, for an unknown function $u:\Omega  \to \mathbb{R}$, we 
consider the following general nonlinear equation:
$$
    \FT(u)(x)=f(x),\ \ \ \ x\in\Omega
$$
where $\FT$ is a nonlinear operator on the space of functions on $\Omega$ and $f$ is a prescribed function.

\paragraph{Some examples} Depending on the choices of $\Omega$, the nonlinear equation may represent many different types of model problems. We offer several scenarios to illustrate the broad applicability of our proposed setting.

\begin{itemize}
    \item \textbf{$\Omega$ is a countable set}. In this setup, our equation can be an infinite system of equations.
    \item \textbf{$\Omega$ is a subspace of $\mathbb{R}^d$}. If $\FT$ is a nonlinear differential or integral operator, 
   then our equation can represent a nonlinear differential or integral equation.
    \item \textbf{$\Omega$ is a function space}. Then the nonlinear equation can represent an operator equation.
\end{itemize}

Moreover, we assume the general form of our nonlinear operator $\FT$ to be a finite measurement nonlinear operator on the whole domain $\Omega$ with the following form:
\begin{equation}
\label{generaloperator}
    \FT(u)(x)=F(L_1u(x),...,L_Qu(x),x),\ \ \ x\in\Omega 
\end{equation}
where $L,L_1,...,L_Q$ represent various linear operators that map $\UT$ to some function spaces on the domain $\Omega$ and $F$ is a nonlinear function from $\mathbb{R}^Q$ to $\mathbb{R}$.
\subsection{Point Evaluation and Reproducing Function Spaces}
\label{subsec:PEandRFS}
For a function space $\BT$ defined on $\Omega$, let us recall the notion of a point evaluation.

\begin{definition}
\label{pointeva}
The point evaluation functional $\delta_x$ associated with a point $x\in\Omega$ is a linear functional on $\BT$, such that 
$$[f,\delta_x]=f(x),\quad\forall\,
f\in\BT.$$
\end{definition}

Note that $\delta_x$ often can also be viewed as a Dirac delta measure.

There have been extensive studies of function spaces where any point evaluation is a bounded functional. Such function spaces are called the \textbf{Reproducing Function Space}. Moreover, if the function space is a Hilbert(Banach) space, then it is a \textbf{Reproducing kernel Hilbert (Banach) space} \cite{steinwart2008support}\cite{zhang2009reproducing}. Specifically, we have the following definition:
\begin{definition}
\label{RKBS}
    A Banach space $\BT$ on domain $\Omega$ is a reproducing kernel Banach space if for any $x\in\Omega$, the point evaluation functional $\delta_x$ is bounded. Namely, there exists a constant $L_x$, such that for $\forall f\in\BT$,
    $$f(x)=[f,\delta_x]\le L_x\|f\|_{\BT}$$
\end{definition}

First, we have the following observation 
\begin{mygraybox}
    \textbf{Observation 1}: When the test functions are point evaluations, any nonlinear function equation with a nonlinear operator of the form (\ref{generaloperator}) satisfies the finite-dimensional condition (\ref{conditionfinite}).
\end{mygraybox}
This observation follows directly from the fact that $\FT$ is a finite measurement neural operator, according to lemma~\ref{lemma:finitedim} and:
$$\FT(u)(x)=[\FT(u),\delta_x]=F_x(\langle u,L_1^*\delta_x\rangle,...,\langle u,L_Q^*\delta_x\rangle)=F(L_1u(x),...,L_Qu(x),x),\ \ \ x\in\Omega $$
where we denote $F_x(\cdot) = F(\cdot,x)$.

Moreover, we note that the convergence analysis of the relaxed nonlinear optimal recovery is subject to a crucial assumption on the approximation function set $\BT^*_0$. Specifically, Assumption \ref{assu3} posits that the span of the approximation function set can approximate any function $\phi \in \BT^*$. We are then ready to present the next important observation:
\begin{mygraybox}
    \textbf{Observation 2}: For any \textbf{reflexive} reproducing kernel Banach space, the set of all point evaluations satisfies the assumption \ref{assu3}.
\end{mygraybox}

To formalize this, we present the following lemma:
\begin{lemma}
\label{pointapprox}
    For any reproducing kernel Banach space $\BT$ on domain $\Omega$, the assumption \ref{assu3} is true for the subset $\BT^*_0=\{\delta_x:x\in\Omega\}$. Namely,
    $$\overline{\text{Span}\{\delta_x:x\in\Omega\}}=\BT^*$$
\begin{proof}
    By the proposition 2 in \cite{wang2024sparse}, we have:
    $$\overline{\text{Span}\{\delta_x:x\in\Omega\}}^{w^*}=\BT^*.$$
    Since $\BT^*$ is reflexive, the weak topology is the same as the weak* topology. Also, by Theorem 3.7 in \cite{brezis2011functional}, any convex set is weakly closed if and only if it is strongly closed, therefore:
    $$\overline{\text{Span}\{\delta_x:x\in\Omega\}}=\overline{\text{Span}\{\delta_x:x\in\Omega\}}^{w^*}=\BT^*.$$
\end{proof}
\end{lemma}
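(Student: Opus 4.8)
The plan is to prove the claimed norm-density in two stages: first establish that $\text{Span}\{\delta_x:x\in\Omega\}$ is dense in $\BT^*$ for the weak* topology, and then promote this to strong (norm) density using reflexivity together with the convexity of a linear span.

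First I would establish weak* density by a Hahn--Banach separation argument. The key point is that the continuous dual of $\BT^*$ equipped with the weak* topology $\sigma(\BT^*,\BT)$ is exactly the predual $\BT$. If the weak*-closed subspace $\overline{\text{Span}\{\delta_x\}}^{w^*}$ were proper, separation would furnish a nonzero $f\in\BT$ annihilating every $\delta_x$; but the reproducing property of Definition~\ref{pointeva} gives $[f,\delta_x]=f(x)$, so annihilation means $f(x)=0$ for all $x\in\Omega$, i.e.\ $f\equiv 0$, a contradiction. Hence the point evaluations separate the predual and $\overline{\text{Span}\{\delta_x\}}^{w^*}=\BT^*$. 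This is precisely the separation statement I would invoke from \cite{wang2024sparse}.

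The second and decisive step is to convert weak* density into norm density, and here reflexivity of $\BT$ is indispensable (it is included in the hypotheses of Observation~2 even though suppressed in the statement). Under reflexivity $\BT^*$ is reflexive and its weak* topology $\sigma(\BT^*,\BT)$ coincides with its weak topology $\sigma(\BT^*,\BT^{**})$, so the weak* closure equals the weak closure. Since a linear span is convex, Mazur's theorem (\cite[Theorem~3.7]{brezis2011functional}) identifies its weak closure with its strong closure. Chaining these equalities gives
$$\overline{\text{Span}\{\delta_x\}}^{\,\|\cdot\|}=\overline{\text{Span}\{\delta_x\}}^{\,w}=\overline{\text{Span}\{\delta_x\}}^{\,w^*}=\BT^*,$$
which is exactly Assumption~\ref{assu3} for $\BT^*_0=\{\delta_x\}$.

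I expect the only real obstacle to be the passage from weak* to norm density: weak* density by itself never forces norm density in a general dual space, so reflexivity cannot be removed. The one subtlety I would be careful about is keeping track of which duality each closure refers to --- the weak* topology is $\sigma(\BT^*,\BT)$, and it is the identification of its dual with $\BT$ (not $\BT^{**}$) that makes the separation in Step~1 produce an element of a genuine function space on which $\delta_x$ acts by evaluation; reflexivity is then what aligns $\sigma(\BT^*,\BT)$ with the weak topology required for Mazur's theorem.
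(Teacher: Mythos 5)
Your proposal is correct and follows essentially the same route as the paper: weak* density of the span of point evaluations (which the paper cites from \cite{wang2024sparse} and you reprove via Hahn--Banach separation against the predual), then reflexivity to identify $\sigma(\BT^*,\BT)$ with $\sigma(\BT^*,\BT^{**})$, then Mazur's theorem \cite[Theorem~3.7]{brezis2011functional} to pass from weak to norm closure of the convex span. Your remark that reflexivity is indispensable and is implicitly assumed (it appears in Observation~2 but not in the lemma's statement) is a fair and accurate observation about the paper's hypotheses.
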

\begin{remark}
    Noticing that by observations 1 and 2, only assumptions \ref{assu1}, \ref{assu11} and \ref{assu2} are needed for the convergence of relaxed nonlinear optimal recovery \eqref{quasiNOR}, which is the same set of assumptions as for the case of the nonlinear optimal recovery \eqref{NOR}. In other words, when $B^*$ is a reproducing kernel Banach space, we can always employ the relaxed nonlinear optimal recovery with approximations by pointwise evaluations, without imposing additional assumptions.
\end{remark}
Lastly, the discussion of weak continuity is presented in the next observation:
\begin{mygraybox}
    \textbf{Observation 3}: Assume $\BT$ is a reflexive reproducing Banach space and assumption~\ref{assu6}($F$ is continuous), $\FT$ satisfies assumption~\ref{assu11}($\FT$ is weak continuous).
\end{mygraybox}
Noticing that by observation 2, the span of the test function set $\BT^*_0=\{\delta_x;x\in\Omega\}$ is dense in $\BT^*$. Then this observation follows directly from the Lemma~\ref{lemma:wc}

\subsection{Finite Dimensional Solutions}
\label{subsec:FDC}
Based on the discussion in the previous subsection, we present two cases where we can obtain a finite-dimensional solution for nonlinear optimal recovery on a reflexive reproducing kernel Banach space:
\begin{itemize}
    \item \textbf{Nonlinear Optimal Recovery}: When test functions are point evaluations.
    \item \textbf{Relaxed Nonlinear Optimal Recovery}: More general test functions.
\end{itemize}
In this subsection, we let $\UT$ be a reproducing Hilbert function space, $\BT$ and $\BT^*$ be a duality pair of reproducing Banach function spaces, all on the domain $\Omega$.
Meanwhile, let $\FT:\UT\rightarrow\BT$ be a nonlinear operator and take the form \eqref{generaloperator}. 
\subsubsection{The case of Nonlinear Optimal Recovery}
Now if the $N$ test functions are points evaluation $\{\delta_{x_n}\}$ on  the set of points $\{x_n\}$, the nonlinear optimal recovery \eqref{NOR} on functions spaces becomes
    \begin{equation}
\label{NFORPoint}
\begin{aligned}
\min_{u\in\mathcal{U}}&\ \ \ \ \ \ \ \ \ \|u\|_\mathcal{U}\\
s.t&\ \ \FT(u)(x_n)=f(x_n),\ \ n=1,...,N
\end{aligned}
\end{equation}
Combining Lemma \ref{lemma:finitedim} and Proposition \ref{OCNonlinear}, we arrive at the following proposition:
\begin{proposition}
\label{OCFunction}
     The solution $u_N$ of the nonlinear optimal recovery problem with point evaluation measurements  \eqref{NFORPoint} satisfies the following condition:
    \begin{equation}
    \label{representerNFOR}
        u_N=\sum_{n=1}^N\sum_{q=1}^Q\lambda_{n,q}\psi_{n,q}
    \end{equation}
    where $\psi_{n,q}$ are given by Riesz representation theorem, satisfies $[L_qv,\delta_{x_n}]=\langle\psi_{n,q},v\rangle_\UT$
\end{proposition}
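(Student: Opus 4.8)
The plan is to recognize that, with point-evaluation test functions, the operator $\FT$ of the form \eqref{generaloperator} is a finite measurement nonlinear operator, and then to chain together Lemma \ref{lemma:finitedim} with the optimality condition of Proposition \ref{OCNonlinear}. First I would fix $n$ and rewrite the constraint functional as $[\FT(u),\delta_{x_n}]=\FT(u)(x_n)=F(L_1u(x_n),\ldots,L_Qu(x_n),x_n)$, putting each argument into inner-product form. Since $\UT$ is a reproducing Hilbert function space and each $L_q$ maps into a function space on which the point evaluation $\delta_{x_n}$ is bounded, the linear functional $v\mapsto L_qv(x_n)=[L_qv,\delta_{x_n}]$ is bounded on $\UT$; the Riesz representation theorem then produces a unique $\psi_{n,q}\in\UT$ with $\langle\psi_{n,q},v\rangle_\UT=[L_qv,\delta_{x_n}]$. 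Substituting $L_qu(x_n)=\langle\psi_{n,q},u\rangle_\UT$ exhibits $\FT$ as a finite measurement nonlinear operator with $L=Q$, representers $\psi_q(\delta_{x_n})=\psi_{n,q}$, and nonlinearity $F_{\delta_{x_n}}(\cdot)=F(\cdot,x_n)$.

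Next I would invoke Lemma \ref{lemma:finitedim} (or, equivalently, repeat its one-line differentiation directly) to compute the Fr\'{e}chet derivative of $\FT$ at $u$ paired against $\delta_{x_n}$:
$$[D_u\FT(u)v,\delta_{x_n}]=\sum_{q=1}^Q\partial_qF\big(L_1u(x_n),\ldots,L_Qu(x_n),x_n\big)\,\langle\psi_{n,q},v\rangle_\UT,\qquad\forall v\in\UT.$$
Comparing this with the defining relation $\langle\psi(u,\delta_{x_n}),v\rangle_\UT=[D_u\FT(u)v,\delta_{x_n}]$ from Proposition \ref{OCNonlinear}, and using the uniqueness of the Riesz representer, I read off
$$\psi(u,\delta_{x_n})=\sum_{q=1}^Q c_{n,q}(u)\,\psi_{n,q},\qquad c_{n,q}(u):=\partial_qF\big(L_1u(x_n),\ldots,L_Qu(x_n),x_n\big).$$
This is exactly the finite-dimensionality condition \eqref{conditionfinite} specialized to the point evaluations $\{\delta_{x_n}\}$, with scalar functions $c_{n,q}$ and $u$-independent representers $\psi_{n,q}$.

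Finally I would substitute this expression into the optimality condition of Proposition \ref{OCNonlinear}, namely $u_N=\sum_{n=1}^N\lambda_n\psi(u_N,\delta_{x_n})$, to obtain
$$u_N=\sum_{n=1}^N\lambda_n\sum_{q=1}^Q c_{n,q}(u_N)\,\psi_{n,q}=\sum_{n=1}^N\sum_{q=1}^Q\big(\lambda_n\,c_{n,q}(u_N)\big)\psi_{n,q},$$
and setting $\lambda_{n,q}:=\lambda_n\,c_{n,q}(u_N)$ yields the claimed representation \eqref{representerNFOR}.

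I do not expect a genuine obstacle here, as the statement is a direct composition of Proposition \ref{OCNonlinear} and Lemma \ref{lemma:finitedim}. The only point requiring care is the well-definedness of the representers $\psi_{n,q}$, which rests on the reproducing-kernel hypothesis on $\UT$ together with the mapping properties of the operators $L_q$ that guarantee $v\mapsto L_qv(x_n)$ is a bounded functional. I would also emphasize that the scalars $\lambda_{n,q}$ still depend implicitly on $u_N$ through $c_{n,q}(u_N)$, so \eqref{representerNFOR} certifies that $u_N$ lies in the fixed finite-dimensional space $\text{span}\{\psi_{n,q}\}$ but, as expected for a nonlinear problem, remains an implicit characterization rather than a closed-form expansion.
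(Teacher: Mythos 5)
Your proposal is correct and follows exactly the route the paper takes: the paper derives Proposition \ref{OCFunction} by "combining Lemma \ref{lemma:finitedim} and Proposition \ref{OCNonlinear}," i.e., by observing (as in Observation 1) that with point-evaluation test functions the operator \eqref{generaloperator} is a finite measurement nonlinear operator, so the Fr\'{e}chet derivative pairing decomposes as $\sum_q \partial_q F(\cdot)\langle\psi_{n,q},v\rangle_\UT$ and the optimality condition collapses into $\mathrm{span}\{\psi_{n,q}\}$. Your additional care about the boundedness of $v\mapsto L_q v(x_n)$ and the implicit dependence of $\lambda_{n,q}$ on $u_N$ is consistent with, and slightly more explicit than, the paper's presentation.
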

\begin{remark}
    This formulation aligns with that presented by Chen et al. \cite{chen2021solving}, who established the convergence theory and solution structure for this case. Our theoretical framework offers an alternative approach to deriving the results reported in their study.
\end{remark}
\subsubsection{The Case of Relaxed Nonlinear Optimal Recovery}
By Lemma \ref{pointapprox}, any function $\phi\in\BT^*$ can be approximated by some point evaluations. Therefore consider the scale-$M$ approximation of the test function $\phi_n$:
$$\left\|\sum_{m=1}^Mc_{mn}\delta_{x_{mn}}-\phi_n\right\|_{\BT*}\rightarrow 0 $$
Then the scale-$M$ relaxed nonlinear optimal recovery becomes
\begin{equation}
\label{quasiNFOR}
\begin{aligned}
\min_{u\in\mathcal{U}}&\ \ \ \ \ \ \ \ \ \|u\|_\mathcal{U}\\
s.t&\ \ \left| \sum_{m=1}^M c_{mn}\FT(u)(x_{mn})-[f,\phi_n]\right|\le \epsilon_{nM},\ n=1,...,N
\end{aligned}
\end{equation}
Combining Lemma \ref{lemma:finitedim} and Proposition \ref{OCquasiNOR} we have:

\begin{proposition}
\label{OCQuasiFunction}
The solution $u_N$ of the relaxed nonlinear optimal recovery problem with point evaluation measurements  \eqref{quasiNFOR} satisfies the following condition:
\begin{equation}
\label{representerNFOR}
u^M_{N}=\sum_{n=1}^N\sum_{m=1}^M\sum_{q=1}^Q\lambda_{n,m,q}\psi_{n,m,q}
    \end{equation}
    where $\psi_{m,q}$ are given by the Riesz representation theorem, satisfies $[L_qv,\delta_{x_{mn}}]=\langle\psi_{n,m,q},v\rangle_\UT$
\end{proposition}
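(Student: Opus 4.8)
The plan is to obtain the representation as a direct specialization of Proposition \ref{OCquasiNOR} to the function-space operator \eqref{generaloperator} paired against point-evaluation test functions, so that the substance of the argument is the identification of the abstract objects $\psi_l(\phi_{mn})$ with the concrete representers $\psi_{n,m,q}$. First I would observe that the operator \eqref{generaloperator}, evaluated against a point evaluation $\delta_x\in\BT^*_0$, reads
\[
[\FT(u),\delta_x]=\FT(u)(x)=F\big(L_1u(x),\dots,L_Qu(x),x\big)=F_x\big(\langle u,R_\UT L_1^*\delta_x\rangle_\UT,\dots,\langle u,R_\UT L_Q^*\delta_x\rangle_\UT\big),
\]
which is exactly the finite measurement nonlinear operator form with $L=Q$ and $\psi_q(\delta_x)=R_\UT L_q^*\delta_x$. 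Hence Lemma \ref{lemma:finitedim} applies and $\FT$ satisfies condition \eqref{Quasicondition} for every $\phi=\delta_x$; concretely, differentiating in $u$ gives $[D_u\FT(u)v,\delta_x]=\sum_{q=1}^Q \partial_qF(L_1u(x),\dots,L_Qu(x),x)\,\langle\psi_q(\delta_x),v\rangle_\UT$, so the scalar functions are $c_q(u,\delta_x)=\partial_q F(L_1u(x),\dots,L_Qu(x),x)$.

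Next, since the constraints of \eqref{quasiNFOR} involve only the finitely many point evaluations $\{\delta_{x_{mn}}:m=1,\dots,M;\,n=1,\dots,N\}$, all of which lie in $\BT^*_0$, the finite-dimensionality condition \eqref{conditionfinite} holds for precisely the test functions entering the feasible set, with $L=Q$. I would then invoke Proposition \ref{OCquasiNOR} with $\phi_{mn}=\delta_{x_{mn}}$, whose optimality-condition argument yields
\[
u^M_N\in\operatorname{span}\{\psi_q(\delta_{x_{mn}}):q=1,\dots,Q;\,n=1,\dots,N;\,m=1,\dots,M\}.
\]
Writing $\psi_{n,m,q}:=\psi_q(\delta_{x_{mn}})=R_\UT L_q^*\delta_{x_{mn}}$, the Riesz identity $\langle\psi_{n,m,q},v\rangle_\UT=[L_qv,\delta_{x_{mn}}]$ holds by construction, and expressing the span membership as a linear combination gives exactly $u^M_N=\sum_{n=1}^N\sum_{m=1}^M\sum_{q=1}^Q\lambda_{n,m,q}\psi_{n,m,q}$.

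The step I expect to require the most care is not the algebra but the bookkeeping that ties the abstract index $l=1,\dots,L$ of Proposition \ref{OCquasiNOR} to the concrete operator index $q=1,\dots,Q$ and to the correct double index $(m,n)$ on the point evaluations; one must verify that the same $Q$ representers are produced at every sample point and that $L=Q$ uniformly, so that the total spanning set has size at most $NMQ$. A secondary subtlety is that condition \eqref{Quasicondition} (hence \eqref{conditionfinite}) is only guaranteed on the subset $\BT^*_0$ of point evaluations rather than on all of $\BT^*$; this is harmless here because the relaxed formulation \eqref{quasiNFOR} was deliberately constructed so that every test function appearing in a constraint is itself a point evaluation, which is exactly why Proposition \ref{OCquasiNOR} can be applied without demanding the finite-dimensionality condition globally.
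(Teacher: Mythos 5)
Your proposal is correct and follows essentially the same route as the paper, which obtains Proposition \ref{OCQuasiFunction} precisely by combining Lemma \ref{lemma:finitedim} (the operator \eqref{generaloperator} tested against point evaluations is a finite measurement nonlinear operator, hence satisfies \eqref{Quasicondition}) with Proposition \ref{OCquasiNOR}. Your explicit identification $\psi_{n,m,q}=R_\UT L_q^*\delta_{x_{mn}}$ and the remark that the finite-dimensionality condition need only hold on the point evaluations actually appearing in the constraints are exactly the details the paper leaves implicit.
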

\begin{remark}
    This formulation extends the model proposed by Chen et al. \cite{chen2021solving}, expanding its applicability to more general problems by accommodating cases where measurements are not limited to precise point evaluations.
\end{remark}
\section{Some Generalizations in Function Spaces}
\label{sec:MOG}
This section presents two generalizations of our relaxed nonlinear optimal recovery framework in function spaces. Subsection \ref{subsec:OD} examines the separate consideration of linear and nonlinear components, exploring the potential for reducing regularity requirements through this decomposition. Subsection \ref{subsec:MDG} introduces a generalization where the nonlinear operator assumes distinct forms across different subdomains. In each subsection, we will demonstrate the convergence of the proposed generalization.
\subsection{Linear-Nonlinear Decomposition}
\label{subsec:OD}
Section \ref{sec:NEinFS} demonstrates that employing point evaluations as test functions ensures the satisfaction of the finite-dimensional condition \eqref{conditionfinite}. Nevertheless, this approach demands $\mathcal{B}$ to be a reproducing kernel Banach space, a requirement that entails exceptionally high regularity within the framework of traditional PDE theory. In other words, to make the
formulation presented earlier \eqref{quasiNFOR}
work with the solution $u^*$, the function \textbf{$\mathcal{F}(u^*)$ must be pointwise defined and bounded}.
On the other hand, for many PDEs, it is possible to decompose the nonlinear operator into two parts: a linear component that retains the primary regularity requirements, and a residual nonlinear component. Such a decomposition may be utilized to lower the regularity assumptions of the optimal recovery formulation on the solution. 

In such cases, we define the function space for the linear term $L(u)$ as $\BT_L$ and for the nonlinear term $\FT(u)$ as $\BT_N$. Then the nonlinear function equation \eqref{nonlinearproblem} on domain $\Omega$ can be reformulated as:
$$[\FT(u),\phi]=[L u,\phi]+[\hat{\FT}(u),\phi]=[f,\phi]$$
for any test function $\phi\in\BT_L^*\cap\BT_N^*=\BT^*$. Same as \eqref{generaloperator}, we assume the $\hat{\FT}$ to be a finite measurement nonlinear operator with the following general form:
$$\hat{\FT}(u)(x)=\hat{F}(L_1u(x),...,L_Qu(x),x)$$
where $L,L_1,...,L_Q$ are different linear operators maps $\UT$ to some function spaces with domain $\Omega$ and $\hat{F}$ is a nonlinear function from $\mathbb{R}^Q$ to $\mathbb{R}$.
\subsubsection{Formulation and Optimality Condition}
For the linear part $L$, we retain the test function $\phi_n$, while employing the point evaluations approximation $\sum_{m=1}^{M} c_{mn}\delta_{x_m}$ of $\phi_n$ for the nonlinear component. This approach allows us to define the corresponding multi-operator relaxed nonlinear optimal recovery as follows:
\begin{equation}
\label{Linear-Nonlinear Case}
\begin{aligned}
\min_{u\in\mathcal{U}}&\ \ \ \ \ \ \ \ \ \|u\|_\mathcal{U}\\
s.t&\ \ \left| [Lu,\phi_n]+[\hat{\FT}(u),\sum_{m=1}^{M} c_{mn}\delta_{x_{mn}}]-[f,\phi_n]\right|\le \epsilon_{nM},\ n=1,...,N
\end{aligned}
\end{equation}
It is important to note that this formulation only requires \textbf{the nonlinear component $\hat{\FT}(u)$ to be pointwise defined}, while the primary regularity requirements are imposed on the linear part. Consequently, this modification allows for the obtainment of solutions with lower regularity compared to the original approach. A concrete example of this will be presented in the section \ref{sec:ACE}.

Then the solution $u_N$ of  \eqref{Linear-Nonlinear Case} satisfies:
$$ u^M_N=\sum_{n=1}^N\lambda_n\psi_n+\sum_{n=1}^N\sum_{m=1}^M\sum_{q=1}^Q\lambda_{n,m,q}\psi_{n,m,q} $$
where $\psi_n,\psi_{n,q}$ are given by Riesz representation theorem, satisfies $ [Lv,\phi_n]=\langle v,\psi_n\rangle_\UT $ and $[L_qv,\delta_{x_{mn}}]=\langle v,\psi_{n,m,q}\rangle_\UT$.
\subsubsection{Convergence Analysis}
Note that in the formula above, we only apply the test function approximation on the nonlinear part. Therefore, one should expect a convergence theorem similar to the relaxed nonlinear optimal recovery. Accordingly, we make a similar weak convergence assumption and propose the following convergence result:
\begin{assumption}
\label{assu4}
$u_N$ weakly converging to $u$ in $\mathcal{U}$ implies $\LT u_N$ weakly converging to $L u$ in $\BT_L$.
\end{assumption}
\begin{corollary}
     Under assumptions \ref{assu1}, \ref{assu2}, \ref{assu4}, $\BT_N$ is a reflexive reproducing Banach space and $\hat{F}$ is continuous, the relaxed nonlinear optimal recovery problem for the linear-nonlinear decomposition \eqref{Linear-Nonlinear Case} with $N$ measurements and scale-$M$ approximation has a solution $u^M_N$ in $\UT$. In addition,  by taking $\epsilon_{nM}=C(\|u^*\|)\hat{\epsilon}_{nM}$,
\begin{itemize}
    \item $u^M_N\rightarrow u_N$ strongly in $\UT$ as $M\rightarrow \infty$, where $u_N$ is the unique solution of the nonlinear optimal recovery \eqref{NOR}. 
    \item $u^M_N\rightarrow u^*$ strongly in $\UT$ as $(M,N)\rightarrow(\infty,\infty)$, 
    where $u^*$ is the solution of the nonlinear problem \eqref{nonlinearproblem}.
\end{itemize}
\end{corollary}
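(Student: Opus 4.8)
The plan is to follow the architecture of the proof of Theorem~\ref{ConvergeQNOR}, adapting each step to the split structure $\FT = L + \hat{\FT}$ in which only the nonlinear part is tested against point-evaluation approximations. First I would record the two structural facts the hypotheses supply for $\hat{\FT}$. Since $\BT_N$ is a reflexive reproducing kernel Banach space and $\hat{F}$ is continuous, Lemma~\ref{lemma:wc} gives that $\hat{\FT}$ is weakly continuous into $\BT_N$; Proposition~\ref{wtob}, applied to $\hat{\FT}$, then supplies a gauge $C(\cdot)$ with $\|u\|_\UT \le K \Rightarrow \|\hat{\FT}(u)\|_{\BT_N} \le C(K)$. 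The linear part is handled separately: Assumption~\ref{assu4} is precisely weak continuity of $L$ into $\BT_L$, and boundedness of $L$ is automatic. This separation is the whole point of the decomposition—regularity is demanded only of $\hat{\FT}(u^*)$, which must lie in $\BT_N$ (i.e.\ be pointwise defined), not of the full residual $\FT(u^*)$.

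Next I would establish the analogue of the feasibility Lemma~\ref{lemmafeasible}: with the prescribed choice $\epsilon_{nM} = C(\|u^*\|)\hat{\epsilon}_{nM}$, both $u^*$ and $u_N$ lie in the feasible set of \eqref{Linear-Nonlinear Case}. Because the linear pairing $[Lu,\phi_n]$ retains the \emph{exact} test function $\phi_n$, the constraint residual collapses, for any $u$ that solves the exact equation against $\phi_n$, to the nonlinear approximation error
$$\left|\left[\hat{\FT}(u),\sum_{m=1}^M c_{mn}\delta_{x_{mn}} - \phi_n\right]\right| \le \|\hat{\FT}(u)\|_{\BT_N}\,\left\|\sum_{m=1}^M c_{mn}\delta_{x_{mn}} - \phi_n\right\|_{\BT_N^*}.$$
For $u^*$ this is at most $C(\|u^*\|)\hat{\epsilon}_{nM} = \epsilon_{nM}$; for $u_N$ the bound $\|u_N\|_\UT \le \|u^*\|_\UT$ (shown in the proof of Theorem~\ref{convergeNOR}) gives $\|\hat{\FT}(u_N)\|_{\BT_N} \le C(\|u^*\|)$ and hence the same estimate. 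Feasibility of $u^*$ makes the admissible set nonempty, weak continuity of both constraint pieces makes it weakly closed, and the direct method (weak lower semicontinuity of $\|\cdot\|_\UT$ on the bounded admissible sublevel set) yields a minimizer $u^M_N$.

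For the limit $M\to\infty$ with $N$ fixed, feasibility of $u_N$ forces $\|u^M_N\|_\UT \le \|u_N\|_\UT$, so the family is bounded and I may pass to a weakly convergent subsequence $u^{M_k}_N \rightharpoonup \bar u$. The linear term converges by Assumption~\ref{assu4}. For the nonlinear term I would use the splitting
$$\left[\hat{\FT}(u^{M_k}_N),\sum_m c_{mn}\delta_{x_{mn}}\right] - [\hat{\FT}(\bar u),\phi_n] = \left[\hat{\FT}(u^{M_k}_N),\sum_m c_{mn}\delta_{x_{mn}} - \phi_n\right] + [\hat{\FT}(u^{M_k}_N) - \hat{\FT}(\bar u),\phi_n],$$
whose first summand is bounded by $C(\|u_N\|)\,\hat{\epsilon}_{nM_k} \to 0$ and whose second summand tends to $0$ by weak continuity of $\hat{\FT}$. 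Passing to the limit shows $\bar u$ solves the exact equations $[L\bar u,\phi_n] + [\hat{\FT}(\bar u),\phi_n] = [f,\phi_n]$, so $\bar u$ is feasible for \eqref{NOR}; combined with $\|\bar u\|_\UT \le \liminf \|u^{M_k}_N\|_\UT \le \|u_N\|_\UT$ and uniqueness of the minimal-norm solution (Assumption~\ref{assu2}), I get $\bar u = u_N$. Subsequence-independence promotes this to weak convergence of the full sequence, and the norm sandwich upgrades it to strong convergence in $\UT$.

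The joint limit $(M,N)\to(\infty,\infty)$ follows the diagonal scheme of Theorem~\ref{ConvergeQNOR}: the chain $\|u^M_N\|_\UT \le \|u_N\|_\UT \le \|u^*\|_\UT$ bounds the whole doubly-indexed family, any weak cluster point is shown—via Assumption~\ref{assu1} together with the same splitting—to solve \eqref{nonlinearproblem}, and norm convergence again gives strong convergence. I expect the only genuinely delicate point to be the limit in the nonlinear pairing, where the weakly convergent sequence $\hat{\FT}(u^{M_k}_N)$ is paired against a test function $\sum_m c_{mn}\delta_{x_{mn}}$ that is itself moving toward $\phi_n$; the resolution is exactly the splitting above, spending the uniform bound from Proposition~\ref{wtob} on the cross term and weak continuity on the diagonal term, which is where the hypotheses ``$\BT_N$ reflexive reproducing'' and ``$\hat{F}$ continuous'' are consumed.
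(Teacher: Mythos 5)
Your proposal is correct and follows essentially the same route the paper intends: the corollary is meant to be proved by rerunning the argument of Theorem~\ref{ConvergeQNOR}, with Assumption~\ref{assu3} supplied automatically by Lemma~\ref{pointapprox} for the point evaluations, weak continuity of $\hat{\FT}$ obtained from Lemma~\ref{lemma:wc}, the linear part handled exactly by Assumption~\ref{assu4}, and the feasibility of $u^*$ and $u_N$ established as in Lemma~\ref{lemmafeasible} with the gauge $C(\cdot)$ from Proposition~\ref{wtob} applied to $\hat{\FT}$ alone. Your identification of the splitting of the nonlinear pairing (uniform bound on the cross term, weak continuity on the diagonal term) as the crux matches the paper's treatment.
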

\begin{remark}
    Since our considerations are in the reproducing kernel Banach space and the approximation is made by point evaluations, assumption \ref{assu3} is automatically satisfied by Lemma \ref{pointapprox}.
\end{remark} 
\subsection{Multi-Domains Generalization}
\label{subsec:MDG}
In our previous discussion of nonlinear equations on the domain $\Omega$, we assumed the nonlinear operator $\FT$ had the uniform form \eqref{generaloperator} throughout the entire domain. However, the nonlinear operator may sometimes be defined piecewise on different subdomains. Consider a decomposition of the domain $\Omega=\cup^P_{p=1}\Omega_p$, where $\Omega_1,...,\Omega_P$ are disjoint subdomains. In this case, we assume $\FT$ takes the following form:
$$ \FT(u)(x)= F_p(L_{1,p}u(x),...,L_{Q_p,p}u(x),x),\qquad \forall x\in\Omega_p,p=1,...,P$$ where $L_{q,p}$ are different linear operators maps $\UT$ to some function spaces with domain $\Omega$ and $F_p$ are nonlinear function from $\mathbb{R}^{Q_p}$ to $\mathbb{R}$.Then we can rewrite the nonlinear equation as:
$$   [\FT(u),\phi]_{\Omega_1}+...+[\FT(u),\phi]_{\Omega_P}=[f,\phi]_\Omega$$
where $[\cdot,\cdot]_{\Omega_p}$ is the duality pairing on the subdomain $\Omega_p$.

There are numerous scenarios where this generalization proves valuable. For instance, it is particularly well-suited to piecewise equations. Furthermore, boundary conditions play a crucial role in many PDEs, and in the case of natural boundary conditions, the boundary values become an integral part of the variational equation. Consequently, these situations align perfectly with this generalization. A concrete example of this will be presented in the section \ref{sec:ACE}.
\subsubsection{Formulation and Optimality Condition}
Regarding the relaxed nonlinear optimal recovery, we consider the point evaluation approximation  $\sum_{m=1}^{M} c_{mnp}\delta_{x_{mnp}}$ of $\phi_n$ on subdomains $\{\Omega_p\}_{p=1}^P$ where $\FT$ is nonlinear,
and arrive at the following relaxed nonlinear optimal recovery formulation:
\begin{equation}
\label{quasiNORM}
\begin{aligned}
\min_{u\in\mathcal{U}}& \ \ \ \|u\|_\mathcal{U}\\
s.t&\ \ \left| [\FT(u),\sum_{m=1}^{M} c_{mn1}\delta_{x_{mn1}}]_{\Omega_1}+\cdots+[\FT(u),\sum_{m=1}^{M} c_{mnP}\delta_{x_{mnP}}]_{\Omega_P}-[f,\phi_n]\right|\le \epsilon_{nM},\ n=1,...,N
\end{aligned}
\end{equation}

Similar to Proposition \ref{OCquasiNOR}, the relaxed nonlinear optimal recovery solution $u^M_N$ satisfies:
$$ u^M_N=\sum_{n=1}^N\sum_{p=1}^P\sum_{m=1}^M\sum_{q=1}^Q\lambda_{n,m,q,p}\psi_{n,m,q,p},$$
where $\psi_{n,m,q,p}$ are given by Riesz representation theorem $[L_qv,\delta_{x_{mn}}]_{\Omega_p}=\langle v, \psi_{n,m,q,p}\rangle_\UT$.
\subsubsection{Convergence Analysis}
Similar to Lemma \ref{pointapprox}, we can show that the point evaluation approximation works on every subdomain $\Omega_p$ for $p=1,...P$. Our convergence result is:
\begin{corollary}
     Under assumptions \ref{assu1}, \ref{assu2}, $\BT$ is a reflexive reproducing Banach space and $F_p$ is continuous, the relaxed nonlinear optimal recovery problem for the multi-domains generalization \eqref{quasiNORM} with $N$ measurements and scale-$M$ approximation has a solution $u^M_N$ in $\UT$. In addition, by taking $\delta_{nM}=C(\|u^*\|)\epsilon_{nM}$,
\begin{itemize}
    \item $u^M_N\rightarrow u_N$ strongly in $\UT$ as $M\rightarrow \infty$, where $u_N$ is the unique solution of the nonlinear optimal recovery \eqref{NOR}. 
    \item $u^M_N\rightarrow u^*$ strongly in $\UT$ as $(M,N)\rightarrow(\infty,\infty)$, 
    where $u^*$ is the solution of the nonlinear problem \eqref{nonlinearproblem}.
\end{itemize}
\end{corollary}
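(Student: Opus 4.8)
The plan is to mirror the proof of Theorem~\ref{ConvergeQNOR}, adapting each ingredient to the piecewise structure of $\FT$ over the decomposition $\Omega=\cup_{p=1}^P\Omega_p$. Three structural facts are reassembled first. The first is a subdomain analogue of Lemma~\ref{pointapprox}, guaranteeing that the point evaluations $\{\delta_x:x\in\Omega_p\}$ span a dense subset of the restriction of $\BT^*$ to each $\Omega_p$, so that the approximation requirement (Assumption~\ref{assu3}) holds separately on every subdomain; this follows from the reflexivity of $\BT$ exactly as in the single-domain case. The second is weak continuity of $\FT$ (Assumption~\ref{assu11}), obtained from the continuity of each $F_p$ and the reflexivity of $\BT$ by applying Lemma~\ref{lemma:wc} on each subdomain and summing. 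The third is the boundedness implication $\|u\|_\UT\le K\Rightarrow\|\FT(u)\|_\BT\le C(K)$ from Proposition~\ref{wtob}, a consequence of weak continuity.

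The crucial preparatory step is a multi-domain version of the feasibility Lemma~\ref{lemmafeasible}: with the choice $\delta_{nM}=C(\|u^*\|)\epsilon_{nM}$, I would show that both $u^*$ and $u_N$ lie in the feasible set of \eqref{quasiNORM}. Since $\sum_{p=1}^P[\FT(u^*),\phi_n]_{\Omega_p}=[f,\phi_n]$ holds exactly, the constrained quantity differs from $[f,\phi_n]$ only through the per-subdomain approximation errors, which I would bound subdomain by subdomain as
$$\left|\sum_{p=1}^P\left[\FT(u^*),\sum_{m=1}^M c_{mnp}\delta_{x_{mnp}}-\phi_n\right]_{\Omega_p}\right|\le\sum_{p=1}^P\|\FT(u^*)\|_{\BT(\Omega_p)}\left\|\sum_{m=1}^M c_{mnp}\delta_{x_{mnp}}-\phi_n\right\|_{\BT^*(\Omega_p)}.$$
This is controlled by $C(\|u^*\|)$ times the approximation residuals on each subdomain, so it stays within tolerance once $\epsilon_{nM}$ absorbs these residuals. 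The same estimate applies to $u_N$ using the inequality $\|u_N\|_\UT\le\|u^*\|_\UT$ established in the proof of Theorem~\ref{convergeNOR}. Existence of a minimizer $u^M_N$ for fixed $M,N$ then follows from the direct method: the feasible set is nonempty and, by weak continuity of $\FT$, weakly closed, while $\|\cdot\|_\UT$ is coercive and weakly lower semicontinuous.

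For the two convergence statements I would run the standard compactness argument. Feasibility of $u_N$ (resp.\ of $u^*$) forces $\|u^M_N\|_\UT\le\|u_N\|_\UT$ (resp.\ $\le\|u^*\|_\UT$), so $\{u^M_N\}$ is bounded and admits a weakly convergent subsequence with weak limit $\bar u$. Using weak continuity of $\FT$ together with $\epsilon_{nM}\to0$, I would pass to the limit in the constraints: as $M\to\infty$ the point-evaluation sums converge to $[\FT(\bar u),\phi_n]$ and the tolerances vanish, so $\bar u$ satisfies the equality constraints of \eqref{NOR}; combined with $\|\bar u\|_\UT\le\|u_N\|_\UT$ and the uniqueness Assumption~\ref{assu2}, this gives $\bar u=u_N$. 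For the joint limit $(M,N)\to(\infty,\infty)$, I would additionally invoke Assumption~\ref{assu1} so that the densifying test functions recover $[\FT(\bar u),\phi]=[f,\phi]$ for all $\phi\in\BT^*$, identifying $\bar u=u^*$. In both cases, weak convergence together with $\limsup\|u^M_N\|_\UT\le\|\bar u\|_\UT$ upgrades to strong convergence in the Hilbert space $\UT$, and a subsequence-of-every-subsequence argument promotes this to convergence of the full sequence.

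The main obstacle I anticipate is the feasibility step: one must verify that the point-evaluation approximation can be carried out \emph{on each subdomain separately} and that the per-subdomain norms $\|\FT(u^*)\|_{\BT(\Omega_p)}$ are simultaneously controlled so that a single tolerance $\delta_{nM}$ suffices across all $P$ subdomains. This requires the subdomain analogue of Lemma~\ref{pointapprox} and a uniform-over-$p$ version of the boundedness Proposition~\ref{wtob}; once these are in place, the remaining arguments are direct transcriptions of the proof of Theorem~\ref{ConvergeQNOR}.
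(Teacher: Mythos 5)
Your proposal is correct and follows essentially the same route the paper intends: it establishes the subdomain analogue of Lemma~\ref{pointapprox} for density of point evaluations on each $\Omega_p$, derives weak continuity and boundedness from Lemma~\ref{lemma:wc} and Proposition~\ref{wtob}, adapts the feasibility Lemma~\ref{lemmafeasible} to the piecewise constraint, and then transcribes the compactness argument of Theorem~\ref{ConvergeQNOR}. The only anticipated obstacle you flag, uniformity of the bound over the subdomains, is immediate since $P$ is finite, so no genuine gap remains.
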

\section{A Concrete Example}
\label{sec:ACE}
In this section, we will work with a concrete example--a nonlinear PDE for the steady state of a reaction-diffusion system on a domain $\Omega\subset\mathbb{R}^3$
\begin{equation}
\label{PDEexample}
-\Delta u(x)+u^3(x)=f(x),\ \ x\in\Omega
\end{equation}
For the homogeneous Dirichlet boundary condition, Subsection \ref{subsec:NORandQNOR} demonstrates the corresponding nonlinear optimal recovery and relaxed nonlinear optimal recovery formulations for this PDE example. We explore the optimal regularity that satisfies all requirements. Furthermore, Subsection \ref{subsec:MOGene} illustrates how the generalization introduced in Section \ref{sec:MOG} helps us to improve or generalize our previous formulation.
\subsection{Nonlinear Optimal Recovery and relaxed nonlinear Optimal Recovery}
\label{subsec:NORandQNOR}
For the homogeneous Dirichlet boundary condition,  the equation \eqref{PDEexample} becomes:
\begin{equation}
\label{PDEexampleD}
    \left\{\begin{aligned}
    -\Delta u(x)+u^3(x)=f(x),\ \ x\in\Omega \\
    u(x)=0\ \ ,\ \ x\in\partial\Omega
\end{aligned}\right.
\end{equation}
and by integration by part, the weak form for this PDE is defined as:
$$\int_\Omega \nabla u(x)\cdot\nabla \phi(x)+u^3(x)\phi(x)dx=\int_\Omega f(x)\phi(x)dx,\ \ \ \ \forall\phi\in H^1_0(\Omega).$$
According to Sobolev embedding theory, in $\mathbb{R}^3$, 
$H^1_0(\Omega)$
embeds into $L^4(\Omega)$ embeds continuously, therefore in this case, the standard test function space $\BT^*=H^1_0(\Omega)\cap L^4(\Omega)=H^1_0(\Omega)$ and $\BT = H^{-1}(\Omega)$. By the classic PDE theory, the minimal requirement on our solution space is that $\UT$ is a Hilbert space \textbf{continuously embedded} in $H^1_0(\Omega)$. 

\subsubsection{Nonlinear Optimal Recovery}
The nonlinear optimal recovery \eqref{NOR} of the equation \eqref{PDEexample} and test function $\{\phi_n\}$ is:
    \begin{equation}
\label{NFORPDE}
\begin{aligned}
\min_{u\in\mathcal{U}}&\ \ \ \ \ \ \ \ \ \|u\|_\mathcal{U}\\
s.t&\ \ \int_\Omega \nabla u(x)\cdot\nabla \phi_n(x)+u^3(x)\phi_n(x)dx=\int_\Omega f(x)\phi_n(x)dx,\ \ n=1,...,N
\end{aligned}
\end{equation}

We will show that to satisfy the assumptions in Theorem \ref{convergeNOR}, the minimal regularity requirement is still $\mathcal{U}$ to be continuously embedded in $H^1_0(\Omega)$.
\begin{itemize}
        \item \textbf{Dense span of test functions}: We can adopt standard practices to pick a series of test functions $\{\phi_n\}$ such that its span is dense in $H^1_0(\Omega)$.
    \item \textbf{Weak continuity for linear part}: For the linear part $-\Delta u$, we need $\nabla u_N$ convergent to $\nabla u$ weakly in $L^2(\Omega)$ if $u_N$ converges to $u$ weakly in $\UT$, which is direct consequence of $\UT$ is \textbf{continuously embedded} into $H^1_0(\Omega)$.

    \item \textbf{Weak continuity for nonlinear part}: Characterizing the continuity of nonlinear maps in the weak topology is often more challenging. Thus, a higher regularity requirement is imposed instead. Specifically, we desire $u_N^3$ converges strongly to $u^3$ in $L^{4/3}(\Omega)\hookrightarrow H^{-1}(\Omega)=\BT$ if $u_N$ converges weakly to $u$ in $\UT$. We then observe that:
    $$\begin{aligned}
    \|u_N^3-u^3\|_{L^{4/3}(\Omega)}^{4/3}&=\int_\Omega (u_N-u)^{4/3}(u_N^2+uu_N+u^2)^{4/3}dx\\
    &\le \left(\int_\Omega(u_N-u)^4dx\right)^{1/3}\left(\int(u_N^2+uu_N+u^2)^2dx\right)^{2/3}\\
    &\le 2\|u_N-u\|^{4/3}_{L^4}\left(\|u_N\|^{8/3}_{{L^4}}+\|u\|^{8/3}_{{L^4}}\right),
    \end{aligned}$$
    which means that  $\UT$ \textbf{compactly embedded} into $L^4(\Omega)$ is sufficient, and this is a direct consequence of $\UT$ continuously embeds into $H^1_0(\Omega)$. 
\end{itemize}
However, Proposition \ref{OCNonlinear} clearly indicates that the nonlinear optimal recovery does not yield a finite-dimensional solution. Therefore, to render our approach applicable, additional steps towards a relaxed nonlinear optimal recovery must be taken.

\subsubsection{Relaxed Nonlinear Optimal Recovery}
\label{subsubsec:QNOR}
We begin by verifying that our equation conforms to the general form \eqref{generaloperator}. Noticing that if we take $L_1=\Delta$, $L_2=\text{id}$ and $F(x,y)=-x+y^3$, we have:
$$ F(L_1u(x),L_2u(x))=-\Delta u(x)+u^3(x) $$
Following the analysis presented in Section \ref{sec:NEinFS}, we employ a point evaluation approximations $\{\sum_{m=1}^M c_{mn}\delta_{x_{mn}}\}$ of the test functions $\{ \phi_n(x)\}$ to achieve a finite-dimensional solution. Then the corresponding scale $M$ Relaxed Nonlinear optimal recovery is
\begin{equation}
\label{quasiNFORPDE}
\begin{aligned}
\min_{u\in\mathcal{U}}&\ \ \ \ \ \ \ \ \ \|u\|_\mathcal{U}\\
s.t&\ \ \left| \sum_{m=1}^Mc_{nm}\left(-\Delta u(x_{mn})+u^3(x_{mn})\right)-\int_\Omega f(x)\phi_n(x)dx\right|\le \epsilon_{nM},\ n=1,...,N
\end{aligned}
\end{equation}
To evaluate the nonlinear PDE at specific points, we require that $-\Delta u + u^3$ be pointwise bounded. This necessitates a sufficient condition that our solution space $\mathcal{U}$ is continuously embedded in $C^2(\Omega)$. 

According to the Sobolev embedding theorem, we see that one possible choice of $\UT$ is $H^4(\Omega)$. In this case, $\BT = H^2(\Omega)$ is a reproducing kernel Hilbert space (therefore, a reflexive reproducing kernel Banach space).

Since we are using the point evaluation approximation, the assumption \ref{assu3} is satisfied automatically. Consequently, we establish the relaxed nonlinear optimal recovery convergence as formulated in \eqref{quasiNFORPDE}. In conclusion, if the solution of equation (\ref{PDEexampleD}) belongs to $H^4(\Omega)$ and we choose $H^{-2}(\Omega)$ as the test function space, then the solution of the relaxed nonlinear optimal recovery problem (\ref{quasiNFORPDE}) will converge to the true solution.

Furthermore, from Proposition \ref{OCQuasiFunction}, we can conclude that our relaxed nonlinear optimal recovery \eqref{quasiNFORPDE} admits a finite-dimensional solution:
$$u_N^M=\sum_{n=1}^N\sum_{m=1}^M\lambda_{mn}\psi_{mn}+\hat{\lambda}_{mn}\hat{\psi}_{mn}$$
where $\psi_{mn}$ and $\hat{\psi}_{mn}$ are Riesz representer satisfied $-\Delta u(x_{mn})=\langle u,\psi_{mn}\rangle_\UT$ and $u(x_{mn})=\langle u,\hat{\psi}_{mn}\rangle_\UT$
\subsection{Some Generalizations}
\label{subsec:MOGene}
In this subsection, we will illustrate how the linear-nonlinear decomposition introduced in Subsection \ref{subsec:OD} enables us to relax the regularity requirements in relaxed nonlinear optimal recovery. Additionally, we show that the multi-domain generalization presented in Subsection \ref{subsec:MDG} facilitates handling cases involving Robin boundary conditions.
\subsubsection{Reduction of Regularity Requirement}
\label{subsubsec:RoRR}
Although the relaxed nonlinear optimal recovery solution of \eqref{PDEexampleD} presented in Subsection \ref{subsubsec:QNOR} is finite-dimensional, it requires that $\mathcal{U}$ be continuously embedded in $C^2(\Omega)$, which imposes a very high regularity requirement in classical PDE theory. However, by employing the operator decomposition technique introduced in Subsection \ref{subsec:OD}, we can reduce this regularity requirement while preserving the finite-dimensional property of the solution. Specifically, by employing a point evaluation approximation point evaluation approximations $\{\sum_{m=1}^M c_{mn}\delta_{x_{mn}}\}$ of the test functions $\{ \phi_n(x)\}$  we propose the following formulation:
\begin{equation}
\label{quasiNFORPDELN}
\begin{aligned}
\min_{u\in\mathcal{U}}&\ \ \ \ \ \ \ \ \ \|u\|_\mathcal{U}\\
s.t&\ \ \left| -\int_\Omega \nabla u(x)\cdot\nabla\phi_n(x)dx+\sum_{m=1}^Mc_{mn}u^3(x_{mn})-\int_\Omega f(x)\phi_n(x)dx\right|\le \epsilon_{nM},\ n=1,...,N
\end{aligned}
\end{equation}
Notably, in this formulation, we only require $u^3$ to be pointwise bounded. Consequently, the regularity requirement for $u$ is reduced to $\mathcal{U}$ being embedded in $H^1_0(\Omega) \cap C(\Omega)$. This represents a significant reduction in regularity compared to the previous $C^2(\Omega)$ requirement. 

By definitions in Subsection \ref{subsec:OD}, we have $\BT_L=H^{-1}(\Omega)$ and $\BT_N$ is continuously embedded into $C^0(\Omega)$. By the Sobolev embedding theory, we see that one possible choice is $\BT_N=H_0^2(\Omega)$, which is both reflexive and reproducing. In this case since $H^1_0(\Omega)=H^{-1}(\Omega)\hookrightarrow (H_0^2(\Omega))^*$, the test function space is $\BT^*=H^1_0(\Omega)$ and $\BT = H^{-1}(\Omega)$.

Furthermore, following the discussion about the equation  \eqref{Linear-Nonlinear Case}, we can conclude that the solution $u_N$ of the equation \eqref{quasiNFORPDELN} is also finite-dimensional and satisfies:
$$ u^M_N=\sum_{n=1}^N\lambda_n\psi_n+\sum_{n=1}^N\sum_{m=1}^M\hat{\lambda}_{mn}\psi_{mn} $$
where $\psi_n,\psi_{n,q}$ are given by Riesz representation theorem, satisfying
$$ -\int\nabla u(x)\cdot\nabla\phi_n(x)dx=\langle u,\psi_n\rangle_\UT\,\quad\text{ and }\quad u(x_{mn})=\langle\psi_{mn},v\rangle_\UT.$$

\subsubsection{The Case of Robin Boundary Conditions}
In this section, we look into the case of Robin Boundary condition of the PDE \eqref{PDEexample}. Namely, we consider the following PDE systems:
\begin{equation}
\label{PDEexampleR}
    \left\{\begin{aligned}
    -\Delta u(x)+u^3(x)=f(x),\ \ x\in\Omega \\
    u(x)+\frac{\partial u}{\partial n}(x)=0\ \ ,\ \ x\in\partial\Omega .
\end{aligned}\right.
\end{equation}
We use a multi-domain representation involving $\Omega$ and $\partial \Omega$ 
with the test function space being a product space $\BT=H^1(\Omega)\times L^2(\partial\Omega)$,
then the weak form of this PDE is given by:
$$\int_\Omega \nabla u(x)\cdot\nabla \phi(x)+u^3(x)\phi(x)dx+{\int_{\partial\Omega}u(x)\phi(x)dx}=\int_\Omega f(x)\phi(x)dx,\ \ \ \ \forall\phi\in \BT$$
and the solution space derived from classical PDE theory is $\UT \hookrightarrow  \BT=H^1(\Omega)\times L^2(\partial\Omega)$. Now we can reformulate our nonlinear optimal recovery \eqref{NOR} as follows:
\begin{equation}
\label{NFORPDER}
\begin{aligned}
\min_{u\in\mathcal{U}}&\ \ \ \ \ \ \ \ \ \|u\|_\mathcal{U}\\
s.t&\ \ \int_\Omega \nabla u(x)\cdot\nabla \phi(x)+u^3(x)\phi(x)dx+{\int_{\partial\Omega}u(x)\phi(x)dx}=\int_\Omega f(x)\phi(x)dx,\ \ n=1,...,N
\end{aligned}
\end{equation}
To make the convergence theorem\ref{convergeNOR} work for this nonlinear optimal recovery, we have to verify the following requirement:
\begin{itemize}
    \item \textbf{Weak continuity (on the boundary $\partial\Omega$}): As the mapping is identical on the boundary $\partial\Omega$, the weak continuity requirement is automatically satisfied.
\end{itemize}
Therefore theorem~\ref{convergeNOR} holds true for the equation $\eqref{NFORPDER}$ and $\UT = H^1(\Omega)\times L^2(\partial\Omega)$.

Furthermore, by setting $\Omega_1=\Omega$ and $\Omega_2=\partial\Omega$, and defining the subdomain operators as $\mathcal{F}_1(u)=-\Delta u+u^3$ and $\mathcal{F}_2(u)=u$, we can reformulate the problem to align with the framework presented in Section \ref{subsec:MDG} as follows:
\begin{eqnarray*}
 [\FT_1(u),\phi]+[\FT_2(u),\phi] &=& \int_\Omega \nabla u(x)\cdot\nabla \phi(x)+u^3(x)\phi(x)dx+{\int_{\partial\Omega}u(x)\phi(x)dx}\\
 &=&  \int_\Omega f(x)\phi(x)dx.
\end{eqnarray*}

Applying the linear-nonlinear decomposition discussed in the previous section on $\Omega_1$, we implement the point evaluation approximations point evaluation approximations $\{\sum_{m=1}^M c_{mn}\delta_{x_{mn}}\}$ of the test functions $\{ \phi_n(x)\}$ solely for the nonlinear term $u^3$. This approach yields the relaxed nonlinear optimal recovery:
\begin{equation}
\label{quasiNFORPDELNR}
\begin{aligned}
\min_{u\in\mathcal{U}}&\ \ \ \ \ \ \ \ \ \|u\|_\mathcal{U}\\
s.t&\ \ \left| -\int_\Omega \nabla u(x)\cdot\nabla\phi_n(x)dx+\sum_{m=1}^Mc_{mn}u^3(x_{mn})\right.\\
& \qquad\qquad \left. +{\int_{\partial\Omega}u(x)\phi(x)dx}-\int_\Omega f(x)\phi_n(x)dx\right|\le \epsilon_{nM},\ n=1,...,N.
\end{aligned}
\end{equation}
Following similar discussion in subsection \ref{subsubsec:RoRR}, we take $\UT = H^2(\Omega)\times L^2(\partial\Omega)$.

From the discussion of subsection \ref{subsec:MDG}, we have the solution $u^M_N$ satisfies:
$$u_N^M=\sum_{n=1}^N\lambda_n\psi_n+\sum_{n=1}^N\hat{\lambda}\hat{\psi}_n+\sum_{n=1}^N\sum_{m=1}^M\lambda_{mn}\psi_{mn}$$
where $\psi_n,\psi_{n,q},\hat{\psi}_n$ are given by Riesz representation theorem, satisfying 
$$ -\int_\Omega\nabla u(x)\cdot\nabla\phi_n(x)dx=\langle u,\psi_n\rangle_\UT ,\;\; u(x_{mn})=\langle\psi_{mn},v\rangle_\UT \;\;\text{ and }\;\;\int_{\partial\Omega}u(x)\phi_n(x)dx=\langle u,\psi_n\rangle_\UT.$$

\section{Conclusion and Future Direction}
\label{sec:CandFD}
This paper introduces a general framework called nonlinear optimal recovery for solving nonlinear problems with finite measurements. Our theoretical analysis demonstrates that as the number of measurements increases, the solution derived from this method converges to the true solution of the problem. 
Moreover, we observe that the solution structure is generally infinite-dimensional. To 
offer further theoretical insight, we propose a sufficient condition for achieving finite-dimensional solutions.

To accommodate a broader range of cases that might arise in applications, we develop a relaxed nonlinear optimal recovery and provide a comprehensive convergence analysis. For nonlinear equations in infinite dimension, we present key ingredients to assure the finite-dimensional condition and formulate the corresponding model. Additionally, we extend our framework to scenarios where it might be convenient to formulate the problem using multiple operators.
In addition, to validate our models and theoretical findings, we examine them through an illustrative example: the steady-state equation of a reaction-diffusion system.

There are numerous potential future research directions stemming from this work, such as those outlined below.
\begin{itemize}
    \item \textbf{Sequential Optimal Recovery}: As a generalization of our nonlinear optimal recovery, We propose the following \textbf{sequential optimal recovery}. Consider a space sequence $\{\UT_n\}_{n=1}^\infty$ such that $\UT_1\subset\UT_2\subset\cdots \subset \UT_n\subset\cdots 
    $, the sequential optimal recovery is
$$\begin{aligned}
\min_{u\in\mathcal{U}_N}&\ \ \ \ \ \ \ \ \ \|u\|_\mathcal{U}\\
s.t&\ \ [\FT(u),\phi_n]=[f,\phi_n],\ \ n=1,...,N
\end{aligned}$$
It should be recognized that this formulation offers a broadly applicable framework that can cover many different cases.
\begin{itemize}
    \item \textbf{Nonlinear Optimal Recovery}: When we take $\UT_N=\UT$, this is exactly our nonlinear optimal recovery problem
    \item \textbf{Galerkin/Petrov Galerkin}: When we take $\UT_N=\text{Span}\{\phi_n\}$, it becomes the 
    Galerkin method. More generally, if we take $\UT_N$ be an $M_N$ dimensional subspace of $\UT$, then it coincides the 
    Petrov Galerkin methods.
\end{itemize}
    In earlier discussions on models like the nonlinear optimal recovery \eqref{NOR}, regularization model \eqref{regularization}, and relaxed nonlinear optimal recovery \eqref{quasiNOR}, we considered the original solution space $\UT$.
    Such a choice made it challenging to obtain a finite-dimensional solution. However, with the extension to a sequential optimal recovery formulation, we allow the solution space to change as the number of measurements changes, making it easier to obtain a finite-dimensional solution. For instance, one can choose the solution space $\UT_n$ to be finite-dimensional, thereby ensuring that the solution is automatically finite-dimensional. The conventional Galerkin and Petrov-Galerkin approximations can be seen as special cases.

    Obviously, the convergence theorem discussed in Section \ref{subsec:NOR} cannot be directly extended to arbitrary sequential optimal recovery problems. This is because the true solution may not belong to the subspace $\mathcal{U}_n$. Thus, generalizing the convergence analysis for sequential optimal recovery presents an intriguing avenue for further investigation.
    \item \textbf{Generalization to Banach Space}
The present work is built within the context of a Hilbert solution space. However, for many PDE problems, the most natural solution space might not be a Hilbert space, but rather a Banach space. When the solution space is a Banach Space $\tilde{\BT}$, the optimal recovery problem becomes:
    $$\begin{aligned}
    \min_{u\in \tilde{\BT}}&\ \ \ \ \ \ \ \ \ \|u\|_{\tilde{\BT}} \\
    s.t&\ \ [\FT(u),\phi_n]=[f,\phi_n],\ \ n=1,...,N
    \end{aligned}$$

    By computing the Fr\'{e}chet derivative of the Lagrangian, we obtain the condition for the solution $u_N$:
    $$u_N=R_{\tilde{\BT}}^{-1}\left(\sum_{n=1}^N\lambda_n[D_u\FT(u)\cdot,\phi_n]\right)$$
    where $R_{\tilde{\BT}}$ is the duality mapping on $\tilde{\BT}$, which is generally nonlinear. Due to the nonlinearity of this mapping, our study on finite-dimensional solutions cannot be directly extended to this case. Therefore, another interesting avenue for future research is to generalize our formulation and the theory of nonlinear optimal recovery to encompass Banach spaces as the underlying solution space.
    \item \textbf{Error Analysis}: The discussion here on the nonlinear optimal recovery is based on the minimum regularity requirements, precluding a meaningful error estimation analysis that usually requires more strict regularity assumptions. Further research into the rate of convergence and/or other asymptotic error estimates would be a valuable avenue to explore.
\end{itemize}

Finally, we note that our work here has focused on theoretical development. There are certainly many practical issues, such as the computational efficiency, to be investigated in future research. Also, the accessibility to measurement data is a key part of the optimal recovery. It will be interesting to consider optimal design problems to find out what are the most theoretically desirable (and practically observable) measurement data in order to achieve better solutions for specific problems from applications.

\section*{Acknowledgements}
This research is supported in part by NSF-DMS 2309245,  
DE-SC0022317 and
DE-SC0025347.

\section{appendices}
\label{sec:append}
\subsection{Proof of Theorem \ref{convergeNOR}}
\label{proofThm1}
\begin{proof}

    We first show the existence of a minimizer for the nonlinear optimal recovery problem \eqref{NOR} at scale $n$, followed by a convergence of the minimizer sequence to the true solution of the nonlinear equation \eqref{nonlinearproblem} in the strong topology of the Hilbert space $\mathcal{U}$.

    \textbf{1. Existence of Minimizer}. Since our objective function is the Hilbert space norm,  it is bounded below by 0. Therefore, there exists a minimizing sequence $\{u^M_N\}_{M=1}^\infty$. Also,  the minimizing sequence is actually minimizing the Hilbert norm. It is thus a bounded sequence in $\mathcal{U}$.

    By the weak compactness of the Hilbert space, we can then find a weakly convergent subsequence of $\{u^M_N\}$. With a little abuse of notation, we will still denote this subsequence as $\{u^M_N\}$ and the weak limit as $\tilde{u}_N$. We then show that $\tilde{u}_N$ satisfied all the constraints in the optimal recovery problem in scale $N$. Noticing that by our second assumption, we can show that:
    $$[\FT(u^M_N),\phi_n]\rightarrow [\FT(\tilde{u}_N),\phi_n],$$
    for $n=1,...,N$. Therefore we will have the weak limit satisfy all the constraints. Moreover, by the weak lower semi-continuity of  norms, we have:
$$\|\tilde{u}_N\|_\mathcal{U}\le\liminf_{n\rightarrow\infty}\|u^M_N\|_{\mathcal{U}}$$
    Since $\{u^M_N\}$ is the minimizing sequence,  we can conclude that $\tilde{u}_N$ is the minimizer of the optimal recovery problem \eqref{NOR}.

    \textbf{2. Convergence Analysis}. Since we have $\|u^*\|_\mathcal{U}\ge \|u_N\|_\mathcal{U}$,  our solutions series $\{u_N\}$ is a bounded series in $\UT$. Therefore we can find a weakly convergent subsequence with weak limit $\hat{u}$. With a little abuse of notation, we still denote the subsequence as $\{u_N\}$ (Note that, as shown later, $\hat{u}$ is actually the weak limit for the whole sequence).

    We wish to prove that $\hat{u}$ satisfies the nonlinear equation  \eqref{nonlinearproblem}. WLOG, we assume that $f$=0. Then our equation becomes:
    $$[ \FT(u),\phi]=0\ \ ,\ \ \forall \phi\in \BT^*$$

    Now, for any $\tilde{\phi}_N\in \text{Span}\left(\phi_1,...,\phi_N\right)$, by summing the constraints we have for $M\ge N$:
    $$[\FT(u_M),\tilde{\phi}_N]=0$$
    by the weak convergence of $\FT(u_M)$(condition 2), we have as $M\rightarrow\infty$, that
    $$[\FT(u_M),\tilde{\phi}_N]\rightarrow [\FT(\hat{u}),\tilde{\phi}_N].$$
    Therefore we can conclude that:
    $$[\FT(\hat{u}),\tilde{\phi}_N]=0.$$
    
    By the first condition of our theorem, we can find a sequence of functions  $\{\hat{\phi}_N(x)\}$ that converges weakly to $\phi$ in $\BT^*$. Therefore we have:
$$[\FT(\hat{u}),\hat{\phi}_N]\rightarrow [\FT(\hat{u}),\phi].$$
     We can then conclude that
    $$[\FT(\hat{u}),\phi]=0,$$
    This means that the weak limit $\hat{u}$ satisfies the nonlinear equation \eqref{nonlinearproblem}. By the uniqueness of the solution, we have $u^*=\hat{u}$. Also, since the weak limit is independent of the sequence,  we conclude that $u$ is actually the weak limit of the whole sequence in $\UT$

    Next, we prove that $u$ is actually the strong limit in $\mathcal{U}$. Noticing that norm convergence and weak convergence imply strong convergence. With the already established weak convergence, we only need to prove the norm convergence. To do this, notice that the Hilbert space norm is a convex functional and weakly lower semi-continuous. As $u$ is the weak limit of $\{u_N\}$, we get
$$\|u^*\|_\mathcal{U}\le\liminf_{n\rightarrow\infty}\|u_N\|_{\mathcal{U}}.$$
    Also, by the definition of the optimal recovery problem, $u_N$ is the smallest norm solution with $N$ measurement, so we also have $\|u^*\|_\mathcal{U}\ge\|u_N\|_\mathcal{U}$ for any $N>0$. Therefore we conclude that:
$$\lim_{N\rightarrow\infty}\|u_N\|_\mathcal{U}=\|u^*\|_\mathcal{U}.$$
    This shows that $u^*$ is actually the strong limit of $u_N$ in $\UT$.
\end{proof}
\subsection{Proof of Theorem \ref{convergeregu}}
\begin{proof}
    Similar to part 1 in Theorem \ref{convergeNOR}, we can show the existence of the solution $u^\mu_N$. We thus focus on proving the two convergence results.

    \textbf{1. Convergence to $u_N$}: We define
$$R_N(u)=\sum_{n=1}^N([\FT(u),\phi_n]-[f,\phi_n])^2$$
    For any $\mu>0$, we observe the following inequality:
    $$\|u_N^\mu\|_\UT\le \|u_N^\mu\|_\UT+\mu R_N(u_N^\mu)\le \|u_N\|_\UT+\mu R_N(u_N)= \|u_N\|_\UT.$$
Therefore, for any sequence $\{\mu_m\}$ such that $\mu_m\rightarrow\infty$ as $m\rightarrow\infty$,  the sequence $\{u^{\mu_m}_N\}$ is bounded.
So, there is a weakly convergent subsequence $\{u^{\mu_m}_N\}$(same as Theorem \ref{convergeNOR}, a little abuse of notations) with a weak limits $\hat{u}_N$. By the weak continuity of the $\FT$, we have $$R_N(u^{\mu_m}_N)\rightarrow R_N(\hat{u}_N)$$
Also, noticing that:
    $$ R_N(u^{\mu_m}_N)\le \frac{1}{\mu_m}\|u^{\mu_m}_N\|_\UT+R_N(u^{\mu_m}_N)\le  \frac{1}{\mu_m}\|u_N\|_\UT\rightarrow 0,$$
    we  conclude that $R_N(\hat{u}_N)=0$. Moreover, by the weak lower semi-continuity of the Hilbert space norm, we have:
$$\|\hat{u}_N\|\le\liminf_{m\rightarrow\infty} \|u_N^{\mu_m}\|_\UT\le \|u_N\|_\UT.$$
    Since $u_N$ is the minimum norm element with the constraints $R_N(u)=0$, we get $\hat{u}_N=u_N$ and it is the weak limits of the whole sequence $\{u^{\mu_m}_N\}$. 
    
    Now, it remains to prove the norm convergence.  Noticing that
$$\|u_N\|\le\liminf_{m\rightarrow\infty}\|u_N^{\mu_m}\|_\UT\le\limsup_{m\rightarrow\infty} \|u_N^{\mu_m}\|_\UT\le  \|u_N\|_\UT,$$
    we conclude that $\lim_{m\rightarrow\infty}\|u^{\mu_m}_N\|=\|u_N\|$. Since the series $\{\mu_m\}$ is arbitrary, we have $u^\mu_N\rightarrow u$ strongly in $\UT$.

    \textbf{2. Convergence to $u$}: For any $\mu>0$, we observe the following inequality
$$\|u_N^\mu\|_\UT\le \|u_N^\mu\|_\UT+\mu R_N(u_N^\mu)\le \|u^*\|_\UT+\mu R_N(u^*)= \|u^*\|_\UT.$$
For any sequence $\{\mu_N\}$ such that $\mu_N\rightarrow\infty$ as $N\rightarrow\infty$,  the sequence $\{u^{\mu_N}_N\}$ is a bounded sequence.  Therefore, there is a weakly convergence subsequence $\{u^{\mu_N}_N\}$(same as Theorem \ref{convergeNOR}, a little abuse of notations) with a weak limit $\hat{u}$. Now for any $\phi\in \BT^*$, by the assumptions, we can find a sequence $\{\hat{\phi}_M\}$ such that $\hat{\phi}_M\in\text{Span}\{\phi_1,...,\phi_M\}$ and
$$ [\FT(\hat{u}),\hat{\phi}_M]\rightarrow [\FT(\hat{u}),\phi]\ \ \ as\ M\rightarrow\infty.$$
Similar to the proof in Theorem\ref{convergeNOR}, one can get that
$$[\FT(u^{\mu_N}_N),\hat{\phi}_M]\rightarrow[\FT(\hat{u}),\hat{\phi}_M]\ \ \ as\ N\rightarrow\infty.$$
Also, as $N\rightarrow \infty$, we have:
$$R_N(u^{\mu_N}_N)\le \mu_N\|u^{\mu_N}_N\|_\UT+R_N(u^{\mu_N}_N)\le \mu_N\|u^*\|_\UT\rightarrow 0,$$
therefore $ [\FT(u^{\mu_N}_N), \hat{\phi}_M]\rightarrow 0 $ as $N\rightarrow\infty$. So we conclude that:
$$[\FT(\hat{u}),\phi]=0$$ which means $\hat{u}=u^*$. Next, we show the norm convergence. By the weak lower semi-continuity of the norm, we have:
$$\|u^*\|_\UT\le \liminf_{N\rightarrow\infty}\| u^{\mu_N}_N \|_\UT $$
and for any $\mu_N>0$
$$\| u^{\mu_N}_N \|_\UT\le\| u^{\mu_N}_N \|_\UT+\mu_N R_N(u^{\mu_N}_N)\le \| u^* \|_\UT+\mu_N R_N(u^*)= \| u^* \|_\UT$$
therefore $\lim_{N\rightarrow\infty} \| u^{\mu_N}_N \|_\UT=\| u^* \|_\UT$. By the arbitrary of the series $\{\mu^N\}$, we have $u^\mu_N\rightarrow u^*$ as $(\mu,N)\rightarrow(\infty,\infty)$ .
\end{proof}

\subsection{Proof of Theorem \ref{ConvergeQNOR}}
\begin{proof}
     Similar to part 1 in Theorem \ref{convergeNOR}, we can show the existence of the solution $u^\mu_N$. Therefore we will focus on proving 2 convergence results.

    \textbf{1. Convergence to $u_N$}: Take the $\{\delta_{nM}\}$ for $u_N$ in lemma \ref{lemmafeasible} we have the true solution $u$ is in the feasible set of the $M-$points approximation algorithm. Therefore the solution for the relaxed nonlinear optimal recovery \eqref{quasiNOR} $\{u^M_N\}_{M=1}^\infty$ is a bounded sequence is $\UT$ since we have:
    $$\|u^M_N\|_\mathcal{U}\le \|u\|_\mathcal{U}.$$Therefore it will have a weak convergent sub-sequence with a weak limit $\hat{u}_N$ when $M\rightarrow\infty$ in $\mathcal{U}$. With a little abuse of notation, we will still denote the weak convergent sub-sequence as $\{u^M_N\}_{M=1}^\infty$, and we will show that $\hat{u}_N$ is actually the strong limit for the whole series later.

     Next, We show that the weak limit $\hat{u}_N$ is actually the solution to the optimal recovery problem \eqref{NOR}. First, for any $n=1,...,N$, since we have $u^M_N$ converges weakly to $\hat{u}_N$ in $\mathcal{U}$ and $\FT(u^M_N)$ converges weakly to $\FT(\hat{u}_N)$ in $\BT$. Therefore there exist a sequence of constants $\{\epsilon^N_{M}\}$ goes to 0 and such that
     $$\left|\left[\FT(u^M_N),\phi_n\right]-\left[\FT(\hat{u}_N),\phi_n\right]\right|\le \epsilon^N_{M}$$
     where the  symbol $[\cdot,\cdot]$ is the duality pairing. Notice that from lemma\ref{lemmafeasible}, we have $\|u^M_N\|_\UT\le \|u^*\|_\UT$, therefore
     $$\left|\left[\FT(u^M_N),\phi_n-\sum_{m=1}^M c_{nm}\phi_{mn}\right]\right|\le \epsilon_{nM}$$

    Next, WLOG we assume $f=0$, Then we have for any $M>0$
    \begin{equation*}
    \begin{aligned}
     \left|\left[\FT(\hat{u}_N),\phi_n\right]\right| &\le \left|\left[\FT(u^M_N),\phi_n\right]-\left[\FT(\hat{u}_N),\phi_n\right]\right|+\left|\left[\FT(u^M_N),\phi_n\right]\right|\\
    &\le \epsilon_M^N+\left|\left[\FT(u^M_N),\sum_{m=1}^M c_{nm}\phi_{mn}\right]\right|+\left|\left[\FT(u^M_N),\phi_n-\sum_{m=1}^M c_{nm}\phi_{mn}\right]\right|\\
    &\le \epsilon_M^N+2\epsilon_{nM}
    \end{aligned}
\end{equation*}
Therefore fixing a $N$, when $M\rightarrow\infty$, we can conclude that:
$$\left[\FT(\hat{u}_N),\phi_n\right]=0\ \ ,\ \ n=1,...,N$$
So it satisfied all the constraints for our optimal recovery problem \eqref{NOR}. Next, we will verify the minimum norm property. We will prove by contradiction, suppose $\|u_N\|_\mathcal{U}<\|\hat{u}_N\|_\mathcal{U}$. Since $\hat{u}_N$ is the weak limit of $\{u^M_N\}_{M=1}^\infty$. By the weakly lower semi-continuous of the norm we have $\|\hat{u}_N\|_\mathcal{U}\le \lim\inf \|u^M_N\|_\mathcal{U}$, but since $u_N$ is in the feasible set of the problem  \eqref{quasiNOR} so we should have $\|u_N\|_\mathcal{U}\ge\|u^M_N\|_\mathcal{U}$. So we should have $\|u_N\|_\mathcal{U}\ge\|\hat{u}_N\|_\mathcal{U}$ , that is a contradiction. By assuming the solution for  \eqref{NOR} is unique, then we have $\hat{u}_N=u_N$. Since the weak limit $u_N$ is independent of the sequence, it is actually the limit of the whole series.

Lastly, we will show that $u_N$ is actually the strong limit. First by the weakly lower semi-continuous of norm we have $\|u_N\|_\mathcal{U}\le \lim\inf \|u^M_N\|_\mathcal{U}$. Also since $u_N$ is in the feasible set of  \eqref{quasiNOR}, we should have $\|u_N\|_\mathcal{U}\ge\|u^M_N\|_\mathcal{U}$. Therefore we can conclude that:
$$\|u_N\|=\lim_{n\rightarrow\infty}\|u^M_N\|$$
By weak convergence and norm convergence implies strong convergence, $u_N$ is actually the strong limit for $\{u^N_M\}$.

\textbf{2. Convergence to $u$}: Take the $\{\delta_{nNM}\}$ for $u^*$ in lemma\ref{lemmafeasible}. Firstly, for any function series $\{u^M_N\}$ with $(M,N)\rightarrow(\infty,\infty)$, since it is a bounded series, it has a weak convergent subsequence with a weak limit $\tilde{u}$. Next, we will try to show that:
$$ \left[\FT(\tilde{u}),\phi_n\right]=[f,\phi]\ \ ,\ \ \forall \phi\in \BT^*$$

WLOG, we assume $f=0$, we only need to prove for any fixed $\phi\in \BT^*$ and $\epsilon>0$, we have:
$$\left[\FT(\tilde{u}),\phi_n\right]\le \epsilon$$

First we will pick a $H>0$ such that there exist a $\tilde{\phi}_H=\sum_{h=1}^Hc_h\phi_h$ such that:
$$\textbf{(Assumption \ref{assu1}) }\left| [\FT(\tilde{u}),\phi]-[\FT(\tilde{u}),\tilde{\phi}_H]\right|\le \frac{\epsilon}{4}$$

Next, let $C_H=\sum_h |c_l|$ we pick a $(N,M)$ pair in the series large enough such that $N>H$ and:
$$\textbf{(Assumption \ref{assu11})  }\left|\left[\FT(u^M_N),\phi_h\right]-\left[\FT(\tilde{u}),\phi_h\right]\right| \le\frac{\epsilon}{4C_H}\ \ ,\ \ h=1,...,H$$
$$ \textbf{(Assumption \ref{assu3})  }\left|\left[\FT(\tilde{u}),\phi_h-\sum_{m=1}^M c_{mh}\phi_{mh}\right]\right|\le \frac{\epsilon}{4C_H}\ \ ,\ \ h=1,...,H$$
$$\textbf{(Algorithm assumption) }\left| \left[\FT(u^M_N),\sum_{m=1}^M c_{mh}\phi_{mh}\right]\right|\le \frac{\epsilon}{4C_H},\ h=1,...,H$$

Then we have:
\begin{equation*}
    \begin{aligned}
        &\left| [\FT(\tilde{u})-\FT(u^M_N),\tilde{\phi}_H]\right|\le \left|\left[\FT(u^M_N),\tilde{\phi}_H\right]-\left[\FT(\tilde{u}),\tilde{\phi}_H\right]\right|\\
        &\le\sum_{h=1}^H|c_h|\left(\left|\left[\FT(u^M_N),\phi_h\right]-\left[\FT(\tilde{u}),\phi_h\right]\right|\right)\le \frac{\epsilon}{4}
    \end{aligned}
\end{equation*}
Also,
\begin{equation*}
    \begin{aligned}
    \left|\left[\FT(\tilde{u}),\sum_{h=1}^Hc_h\left(\sum_{m=1}^M c_{mh} \phi_{mh}\right)-\tilde{\phi}_H\right]\right|
    \le \sum_{h=1}^H|c_h|\left|\left[\FT(\tilde{u}),\phi_h-\sum_{m=1}^M c_{mh}\phi_{mh}\right]\right|\le \frac{\epsilon}{4}
    \end{aligned}
\end{equation*}
And,
\begin{equation*}
    \begin{aligned}
    \left|\left[\FT(\tilde{u}),\sum_{h=1}^Hc_h\left(\sum_{m=1}^M c_{mh} \phi_{mh}\right)\right]\right|\le \sum_{h=1}^H |c_{h}|\left|\left[\FT(\tilde{u}),\sum_{m=1}^M c_{mh} \phi_{mh}\right]\right|\le\frac{\epsilon}{4}
    \end{aligned}
\end{equation*}

Summing these 4 inequalities together we will get
$$ \left|[\FT(\tilde{u}_N),\phi]-[f,\phi]\right|\le\epsilon\ \ ,\ \ \forall \phi\in \BT^*$$
for any $\epsilon>0$, so by the uniqueness of the solution of the equation, we have $\tilde{u}=u$. Utilizing similar arguments in part 1 we can show that $\tilde{u}$ is also the strong limit for $\{u^M_N\}$, so we complete the proof.
\end{proof}

\bibliography{sn-bibliography}
\end{document}